\numberwithin{equation}{section}
\newcommand{\bfC}{{\mathbf C}}
\newcommand{\bbF}{{\mathbb F}}
\newcommand{\Ker}{\mathop{\mathrm{Ker}}}
\newcommand{\Alt}{\mathop{\mathrm{Alt}}}
\newcommand{\Sym}{\mathop{\textrm{Sym}}}
\newcommand{\Aut}{\mathop{\mathrm{Aut}}}
\newcommand{\Irr}{\mathop{\mathrm{Irr}}}
\newcommand{\Tr}{\mathop{\mathrm{Tr}}}
\newcommand{\PSL}{\mathop{\mathrm{PSL}}}
\newcommand{\PGL}{\mathop{\mathrm{PGL}}}
\newcommand{\Sp}{\mathop{\mathrm{Sp}}}
\newcommand{\Sz}{\mathop{\mathrm{Sz}}}
\newcommand{\Ree}{\mathop{\mathrm{Ree}}}
\newcommand{\varep}{\varepsilon}
\newtheorem{theorem}{Theorem}[section]
\newtheorem{lemma}[theorem]{Lemma}
\newtheorem{proposition}[theorem]{Proposition}
\newtheorem{conjecture}[theorem]{Conjecture}
\newtheorem{claim}[theorem]{Claim}
\def\cent#1#2{{\bf C}_{{#1}}{{(#2)}}}
\def\norm#1#2{{\bf N}_{{#1}}{{(#2)}}}
\begin{document}

\title[Weak ERK-property]{An Erd\H{o}s-Ko-Rado theorem for finite $2$-transitive groups}

\author[K. Meagher]{Karen Meagher}
\address{ Department of Mathematics and Statistics,\newline
University of Regina, 3737 Wascana Parkway, S4S 0A4 Regina SK, Canada}\email{karen.meagher@uregina.ca}

\author[P. Spiga]{Pablo Spiga}
\address{
Dipartimento di Matematica e Applicazioni, University of Milano-Bicocca,\newline
Via Cozzi 55, 20125 Milano, Italy}\email{pablo.spiga@unimib.it}

\author[P. H. Tiep]{Pham Huu Tiep}
\address{
Department of Mathematics, University of Arizona, Tucson, AZ 85721-0089, USA}\email{tiep@math.arizona.edu}

\thanks{The first author is supported by NSERC. The third author was partially supported by the NSF grant DMS-1201374 and the Simons 
Foundation Fellowship 305247.\\ \noindent Address correspondence to Pablo Spiga. (pablo.spiga@unimib.it)}

\keywords{derangement graph, independent sets, Erd\H{o}s-Ko-Rado theorem}

\begin{abstract}
  We prove an analogue of the classical Erd\H{o}s-Ko-Rado theorem for
  intersecting sets of permutations in finite $2$-transitive
  groups. Given a finite group $G$ acting faithfully and
  $2$-transitively on the set $\Omega$, we show that an intersecting
  set of maximal size in $G$ has cardinality $|G|/|\Omega|$. This
  generalises and gives a unifying proof of some similar recent
  results in the literature.
\end{abstract}

\subjclass[2010]{Primary 05C35; Secondary 05C69, 20B05}
\maketitle

\section{General results}\label{generalresults}

The Erd\H{o}s-Ko-Rado theorem~\cite{ErKoRa} determines the cardinality
and also describes the structure of a set of maximal size of
intersecting $k$-subsets from $\{1,\ldots,n\}$. The theorem shows that
provided that $n > 2k$, a set of maximal size of intersecting
$k$-subsets from $\{1,\dots, n\}$ has cardinality ${n-1\choose k-1}$
and is the set of all $k$-subsets that contain a common fixed
element. (For our work it is useful to emphasize that this theorem
consists of two distinct parts: the first part determines the maximal
size of intersecting $k$-subsets; the second part classifies the sets
attaining this maximum.) Analogous results hold for many other
combinatorial and algebraic objects other than sets, and in this paper
we are concerned with an extension of the Erd\H{o}s-Ko-Rado theorem to
permutation groups.

Let $G$ be a permutation group on $\Omega$. A subset $S$ of $G$ is
said to be \emph{intersecting} if, for every $g,h\in S$, the permutation
$gh^{-1}$ fixes some point of $\Omega$ (note that this implies that
$\alpha^g = \alpha^h$, for some $\alpha \in \Omega$). As with the
Erd\H{o}s-Ko-Rado theorem, in this context we are interested in finding the
cardinality of an intersecting set of maximal size in $G$ and possibly
classifying the sets that attain this bound. 

The main theorem of this paper answers the first question for $2$-transitive groups.
\begin{theorem}\label{main}
  Let $G$ be a finite $2$-transitive group on the set $\Omega$. An
  intersecting set of maximal size in $G$ has cardinality
  $|G|/|\Omega|$.
\end{theorem}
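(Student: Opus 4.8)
The plan is to pass to the derangement graph and apply the ratio (Hoffman--Delsarte) bound. Let $\cD\subseteq G$ be the set of derangements, i.e.\ the elements fixing no point of $\Omega$, and let $\Gamma_G$ be the Cayley graph on $G$ with connection set $\cD$. Since $gh^{-1}$ fixes a point precisely when $g,h$ are non-adjacent in $\Gamma_G$, a subset of $G$ is intersecting if and only if it is a coclique (independent set) of $\Gamma_G$; thus the theorem asserts that the coclique number of $\Gamma_G$ equals $|G|/|\Omega|$. The lower bound is immediate: for any point $\alpha$ and any $x\in G$ the coset $G_\alpha x$ is intersecting, because $g,h\in G_\alpha x$ give $gh^{-1}\in G_\alpha$, which fixes $\alpha$; and $|G_\alpha x|=|G_\alpha|=|G|/|\Omega|$ by transitivity. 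So the whole content lies in the matching upper bound.

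For the upper bound I would exploit that $\cD$ is closed under conjugation and inversion, so $\Gamma_G$ is a normal Cayley graph and its eigenvalues are the normalized character sums
\begin{equation*}
\eta_\psi \;=\; \frac{1}{\psi(1)}\sum_{g\in\cD}\psi(g), \qquad \psi\in\Irr(G),
\end{equation*}
with the degree $|\cD|=\eta_{\mathbf 1}$ corresponding to the trivial character. The ratio bound states that for a $d$-regular graph on $|G|$ vertices with least eigenvalue $\tau<0$, any coclique has size at most $|G|\,(-\tau)/(d-\tau)$. Here $d=|\cD|$, and the key point is to identify $\tau$. Because $G$ is $2$-transitive, its permutation character is $\mathbf 1+\chi$ with $\chi\in\Irr(G)$ of degree $|\Omega|-1$; since every $g\in\cD$ has no fixed points, $\chi(g)=-1$ on $\cD$, whence
\begin{equation*}
\eta_\chi \;=\; \frac{1}{|\Omega|-1}\sum_{g\in\cD}(-1)\;=\;-\,\frac{|\cD|}{|\Omega|-1}.
\end{equation*}
A direct computation shows that substituting $\tau=-|\cD|/(|\Omega|-1)$ into the ratio bound yields exactly $|G|/|\Omega|$, so the theorem follows the moment one knows that this is the \emph{least} eigenvalue of $\Gamma_G$.

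Thus the crux, and the main obstacle, is to prove that every nontrivial $\psi\in\Irr(G)$ satisfies $\eta_\psi\ge -|\cD|/(|\Omega|-1)$, equivalently $\sum_{g\in\cD}\psi(g)\ge -\psi(1)\,|\cD|/(|\Omega|-1)$. This is a delicate positivity statement about character sums over derangements and does not appear to follow from soft arguments; I would attack it through the classification of finite $2$-transitive groups. The affine groups $G=V\rtimes H$, with $V$ a regular elementary abelian socle and $H$ transitive on the nonzero vectors, can be treated fairly uniformly, reducing the character sum to counting governed by the action of $H$ on $V$. The genuinely hard cases are the almost simple groups $T\le G\le\Aut(T)$, where $T$ runs over the known list (alternating, $\PSL$, symplectic, unitary, $\Sz$, $\Ree$, and finitely many sporadic groups): there one needs sharp upper bounds on $|\psi(g)|$ for $g\in\cD$ together with good control of $|\cD|$, so that the potentially large negative contributions of individual character values cannot drive $\eta_\psi$ below $-|\cD|/(|\Omega|-1)$. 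I expect these character estimates, carried out case by case over the simple socles, to absorb essentially all the difficulty of the proof; once they are in place, the ratio bound closes the argument and delivers the asserted cardinality $|G|/|\Omega|$.
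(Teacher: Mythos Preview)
Your overall framework is sound and closely mirrors the paper's, but there is a genuine obstruction at the heart of your strategy. You reduce everything to showing that $\eta_\chi=-|\cD|/(|\Omega|-1)$ is the least eigenvalue of $\Gamma_G$; however, this is \emph{false} in general. The paper observes explicitly that for the Higman--Sims group in its $2$-transitive action of degree $176$, one has $\chi(1)=175$ and $\eta_\chi=-79806$, while there is an irreducible character of degree $22$ giving eigenvalue $-118650<\eta_\chi$. The same phenomenon occurs for $\mathrm{PSU}_3(q)$ and for both $2$-transitive actions of $\Sp_{2n}(2)$. In these cases no amount of character-value estimation will establish the inequality $\eta_\psi\ge\eta_\chi$ that you are aiming for, because it simply does not hold; your proposed case-by-case programme therefore cannot close as written.

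The paper's remedy is to replace the adjacency matrix of $\Gamma_G$ by a suitably \emph{weighted} adjacency matrix: one assigns non-negative real weights $a_i$ to the conjugacy classes $\mathcal{C}_i\subseteq\cD$, and the ratio bound still applies with the weighted eigenvalues $\lambda(\psi,\mathbf a)=\psi(1)^{-1}\sum_i a_i|\mathcal{C}_i|\psi(x_i)$. By tuning the weights one can force $\lambda(\chi,\mathbf a)$ to become the minimum even when it was not for the unweighted graph (for $HS$ one simply zeroes out all derangement classes except the two of element-order $11$; for $\mathrm{PSU}_3(q)$ and $\Sp_{2n}(2)$ more delicate choices are made, supported by precise information about Weil characters). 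A secondary point: for the affine $2$-transitive groups, rather than analysing character sums over $\cD$ as you suggest, the paper notes that the regular normal subgroup is itself a clique of size $|\Omega|$ in $\Gamma_G$, so the clique--coclique bound $|\text{clique}|\cdot|\text{coclique}|\le|G|$ dispatches that case in one line with no eigenvalue analysis at all.
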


Before giving some specific comments on Theorem~\ref{main} (which
ultimately relies on the classification of finite simple groups and on
some detailed analysis of the representation theory of some Lie type
groups), we give some historical background on this area of research.

\subsection{Erd\H{o}s-Ko-Rado-type theorems for permutation groups} 

Possibly the most interesting permutation group and the most
intriguing combinatorial object is the finite symmetric group
$\Sym(n)$ of degree $n$.  Here, the natural extension of the
Erd\H{o}s-Ko-Rado theorem for $\Sym(n)$ was independently proved
in~\cite{CaKu} and~\cite{LaMa}.  These papers, using different
methods, showed that every intersecting set of ${\Sym(n)}$ has
cardinality at most the ratio
$|\Sym(n)|/|\{1,\ldots,n\}|=(n-1)!$. They both further showed that the
only intersecting sets meeting this bound are the cosets of the
stabiliser of a point. The same result was also proved in~\cite{GoMe}
using the character theory of $\Sym(n)$.

Despite the exact characterization of the largest
intersecting sets for $\Sym(n)$, Theorem~\ref{main} cannot be
strengthened to include such a characterization for all 2-transitive
groups.  There are various $2$-transitive groups having intersecting
sets of size $|G|/|\Omega|$ which are not the cosets of the stabiliser
of a point, see Conjecture~$1.2$ below or~\cite{KaPa,KaPa2} for some
examples. Currently, it is not clear to what extent
(that is, for which families of permutation groups) the complete
analogue of the Erd\H{o}s-Ko-Rado theorem holds.

Recently there have been many papers proving that the natural
extension of the Erd\H{o}s-Ko-Rado theorem holds for specific
permutation groups $G$ (see~\cite{AhMeAlt,
  Ellis, MR2302532,KaPa,KaPa2,MR2419214}) and there are also two papers, \cite{AhMe}
and \cite{AhMetrans}, that consider when the natural extension of the
Erd\H{o}s-Ko-Rado theorem holds for transitive and $2$-transitive
groups. Again, this means asking if the largest intersecting sets in
$G$ are the cosets in $G$ of the stabiliser of a point. Typically, a
permutation group may have intersecting sets of size larger than the
size of the stabiliser of a point, let alone hope that every such
intersecting set is the coset of the stabiliser of a point.  However,
a behaviour very similar to $\Sym(n)$ is offered by $\PGL_2(q)$ in its
natural action on the projective line~\cite[Theorem~$1$]{KaPa}; the
intersecting sets of maximal size in $\PGL_2(q)$ are exactly the
cosets of the stabiliser of a point. It is not hard to see that in
projective general linear groups of dimension greater than $2$ there
are maximum intersecting sets that are not the cosets of the
stabiliser of a point (for instance, the cosets of the stabiliser of a
hyperplane).  This lead the first and the second author to pose the
following conjecture:

\begin{conjecture}[{\cite[Conjecture~$2$]{KaPa}}] The intersecting sets
  of maximal size in $\PGL_{n}(q)$ acting
  on the points of the projective space are exactly the
  cosets of the stabiliser of a point and the cosets of the stabiliser
  of a hyperplane.
\end{conjecture}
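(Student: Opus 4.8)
The plan is to combine the eigenvalue information supplied by Theorem~\ref{main} with a two-step analysis: first pin down the least-eigenvalue eigenspace of the derangement graph exactly, then prove a rigidity statement for its $0$--$1$ vectors. Write $\theta=|\Omega|=(q^n-1)/(q-1)$ and let $\Gamma$ be the derangement graph of $G=\PGL_n(q)$, the normal Cayley graph on $G$ whose connection set $\cD$ is the set of point-derangements; the intersecting sets are exactly the cocliques of $\Gamma$. A point stabiliser $G_P$ is an intersecting set of size $|G|/\theta$, and $\chi_p$, the nontrivial constituent of the point permutation character $\pi=1+\chi_p$, affords the eigenvalue $-|\cD|/(\theta-1)$ (as $\pi$ vanishes on $\cD$, one has $\sum_{g\in\cD}\chi_p(g)=-|\cD|$). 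The ratio-bound proof of Theorem~\ref{main} identifies $\tau=-|\cD|/(\theta-1)$ as the least eigenvalue and shows the bound is tight, which forces $v_S-\tfrac{|S|}{|G|}\mathbf 1\in U_\tau$ for the characteristic vector $v_S$ of every maximum coclique $S$, where $U_\tau$ is the $\tau$-eigenspace. Since $\Gamma$ is normal, $U_\tau$ is a two-sided submodule of $\mathbb C[G]$, hence a direct sum of isotypic components $U_\chi$ ($\chi\in\Irr(G)$). The hyperplane stabilisers enter here: a coset of $G_H$ is intersecting because an element stabilising a hyperplane $W$ acts on the line $V/W$ as a scalar in $\mathbb F_q$, so it has an $\mathbb F_q$-eigenvalue and fixes a projective point. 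Thus $v_{G_H}$ is also a maximum coclique, and since its only nonzero isotypic projections are onto $U_1$ and $U_{\chi_h}$ (where $\chi_h$ is the hyperplane heart, distinct from $\chi_p$ for $n\ge 3$ and interchanged with it by the inverse-transpose automorphism), irreducibility of $U_{\chi_p}$ and $U_{\chi_h}$ as $G\times G$-modules forces $U_{\chi_p}\oplus U_{\chi_h}\subseteq U_\tau$.

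The crux, and what I expect to be the main obstacle, is the reverse inclusion $U_\tau=U_{\chi_p}\oplus U_{\chi_h}$: no irreducible other than $\chi_p,\chi_h$ may afford $\tau$. This is a purely representation-theoretic estimate, since $\lambda_\chi=\chi(1)^{-1}\sum_{g\in\cD}\chi(g)$, and one must prove $\lambda_\chi>\tau$ for every $\chi\in\Irr(G)$ outside $\{1,\chi_p,\chi_h\}$. I would run through the Deligne--Lusztig/Green parametrisation of $\Irr(\PGL_n(q))$ and bound $\sum_{g\in\cD}\chi(g)$ below relative to $\chi(1)$; equivalently, one controls the multiplicities of $\chi$ in the permutation characters on the Grassmannians of intermediate-dimensional subspaces, since these measure how negative $\lambda_\chi$ can become. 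I would expect a generic bound keeping $\lambda_\chi$ bounded away from $\tau$ by a quantity of order $|\cD|/q^n$ for all but the two hearts, leaving only finitely many pairs $(n,q)$ to be cleared by direct computation with the known character tables. The delicate range is the band of unipotent and semisimple characters of intermediate degree, where the estimate is tightest, and controlling these uniformly in $n$ and $q$ is the real work of this step.

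Granting $U_\tau=U_{\chi_p}\oplus U_{\chi_h}$, it remains to prove the rigidity statement that the only $0$--$1$ vectors of weight $|G|/\theta$ in $U_1\oplus U_{\chi_p}\oplus U_{\chi_h}$ that are cocliques are the coset indicators $v_{xG_P}$ and $v_{xG_H}$. Here I would use a Singer cycle, which acts regularly on the $\theta$ points and is therefore a maximum \emph{clique} (a sharply transitive set) of $\Gamma$; the clique--coclique bound, tight by Theorem~\ref{main}, then forces every maximum coclique $S$ to meet each coset of the Singer subgroup in exactly one element. Combining this transversal property with the decomposition $v_S=\tfrac{|S|}{|G|}\mathbf 1+u_p+u_h$, with $u_p\in U_{\chi_p}$ and $u_h\in U_{\chi_h}$, I would show that $S$ cannot be a genuine mixture: projecting $v_S$ onto the point and hyperplane permutation modules and reading the entries through the point--hyperplane incidence, one argues that a nonzero $u_p$ and a nonzero $u_h$ cannot coexist in a genuine $0$--$1$ vector of the correct weight, so $v_S\in U_1\oplus U_{\chi_p}$ or $v_S\in U_1\oplus U_{\chi_h}$, whence $S$ is a single coset. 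Ruling out such mixtures is the second substantial difficulty; after the representation theory of the previous step it should reduce to a finite combinatorial rigidity lemma about $0$--$1$ functions in a sum of two multiplicity-free-type modules, in the spirit of the characterisations already obtained for $\Sym(n)$ in~\cite{GoMe} and for $\PGL_2(q)$ in~\cite{KaPa}.
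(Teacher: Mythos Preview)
The statement you are trying to prove is a \emph{conjecture}; the paper does not prove it and explicitly records that it is open except for small $n$. So there is no ``paper's own proof'' to compare against---your proposal is an attack on an open problem, not a reconstruction of an existing argument.

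More importantly, your plan contains a structural error that collapses the whole strategy. You assert that for $n\ge 3$ the point heart $\chi_p$ and the hyperplane heart $\chi_h$ are \emph{distinct} irreducible characters, so that $U_\tau$ should decompose as $U_{\chi_p}\oplus U_{\chi_h}$ and the final rigidity step can separate $v_S$ into a ``point part'' $u_p$ and a ``hyperplane part'' $u_h$. This is false: every matrix in $\mathrm{GL}_n(q)$ is conjugate to its transpose, so for each $g$ the number of $g$-invariant $1$-spaces equals the number of $g$-invariant hyperplanes. Hence $\pi_{\mathrm{points}}=\pi_{\mathrm{hyperplanes}}$ as characters and $\chi_p=\chi_h$. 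The paper's own analysis (Proposition~\ref{prop:PSL}) in fact shows, for the simple group and generically in $(n,q)$, that $\psi=\chi_p$ is the \emph{unique} irreducible affording the minimum eigenvalue; so $U_\tau=U_{\chi_p}$ has dimension $(\theta-1)^2$, and the span of the point-coset indicators already equals $U_1\oplus U_{\chi_p}$, which therefore also equals the span of the hyperplane-coset indicators. Both candidate families of maximum cocliques live in the \emph{same} isotypic component, and your proposed mechanism for distinguishing them---projecting onto $U_{\chi_p}$ versus $U_{\chi_h}$---does not exist.

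This is precisely why the conjecture is hard: the module-theoretic information you invoke cannot tell a coset of $G_P$ from a coset of $G_H$, let alone rule out other $0$--$1$ vectors in $U_1\oplus U_{\chi_p}$. Any genuine proof must supply an additional, non-character-theoretic rigidity argument inside that single eigenspace, and your sketch of the ``second substantial difficulty'' does not survive once the direct-sum splitting disappears.
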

This conjecture has been settled only for $n\in \{1,2\}$ in~\cite{KaPa,KaPa2}.

\subsection{A conjecture inspired by Theorem~\ref{main}}

As far as we are aware, Theorem~\ref{main} is the first result that
holds for a very rich family of permutation groups; thus far, most
Erd\H{o}s-Ko-Rado-type of theorems for permutation groups have been
proved for rather specific families. In this paper we aim for a more
general result.

For some $2$-transitive groups we prove a stronger version of
Theorem~\ref{main}, which we now explain. Let $V$ be the vector space
over $\mathbb{C}$ having a basis $(g)_{g\in G}$ indexed by the
elements of $G$, thus $V$ is the underlying vector space of the group
algebra $\mathbb{C}G$. Given a subset $S$ of $G$, we write
$\chi_S=\sum_{s\in S}s$ for the characteristic vector of $S$.
Set 
\[
W=\langle \chi_S\mid S\textrm{ coset of the stabiliser of a
  point in }G\rangle.
\]

For some $2$-transitive groups $G$, we show that if $S$ is an
intersecting set of maximal size, then $\chi_S\in V$. We actually dare
to state the following conjecture (which we like to think as the
algebraic analogue of the combinatorial Erd\H{o}s-Ko-Rado theorem).

\begin{conjecture}\label{newconjecture}
  Let $G$ be a finite $2$-transitive group and let $V$ be the subspace
  of the group algebra $\mathbb{C}G$ spanned by the characteristic
  vectors of the cosets of the point stabilisers. If $S$ is an
  intersecting set of maximal cardinality in $G$, then the
  characteristic vector of $S$ lies in $V$.
\end{conjecture}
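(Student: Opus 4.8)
The plan is to translate the conjecture into the language of the derangement graph and its spectrum, and then to isolate precisely what must be proved beyond Theorem~\ref{main}. Write $\Gamma$ for the derangement graph of $G$: the Cayley graph on $G$ whose connection set $D$ is the set of derangements (elements fixing no point of $\Omega$). Intersecting sets are exactly the cocliques of $\Gamma$, and since $D$ is closed under conjugation, $\Gamma$ is a normal Cayley graph; its eigenvalues are $\lambda_\psi=\frac{1}{\psi(1)}\sum_{d\in D}\psi(d)$ indexed by $\psi\in\Irr(G)$, and the $\lambda_\psi$-eigenspace is the sum of the $\psi$-isotypic blocks of $\mathbb{C}G$. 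Since $G$ is $2$-transitive, the permutation character is $1+\chi$ with $\chi\in\Irr(G)$ of degree $|\Omega|-1$, and $\chi(d)=-1$ for every $d\in D$; hence $\lambda_\chi=-|D|/(|\Omega|-1)$. By Theorem~\ref{main} together with the ratio (Hoffman) bound, this value is the least eigenvalue $\tau$ of $\Gamma$, and the bound is tight.

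First I would determine $V$ as a $G\times G$-module. The cosets of the point stabilisers are the sets $A_{\gamma,\delta}=\{g\in G : \gamma^g=\delta\}$; under the two-sided action $(x,y)\cdot g = xgy^{-1}$ of $G\times G$ on $\mathbb{C}G$ these sets are permuted, so $V$ is a $G\times G$-submodule, namely the image of a $G\times G$-map $\Phi$ from $\mathbb{C}\Omega\otimes\mathbb{C}\Omega$. As a $G\times G$-module the source has constituents $1\boxtimes1$, $1\boxtimes\chi$, $\chi\boxtimes1$, $\chi\boxtimes\chi$ (using that $\chi$ is real, hence self-dual), whereas $\mathbb{C}G\cong\bigoplus_{\psi}\psi\boxtimes\psi^{*}$. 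Matching constituents, the mixed summands map to $0$ and only $1\boxtimes1$ and $\chi\boxtimes\chi$ survive; a short rank/Schur computation shows $\Phi$ is nonzero on $\chi\boxtimes\chi$, whence
\[
V=\langle\mathbf{1}\rangle\oplus(\chi\text{-isotypic block of }\mathbb{C}G),\qquad \dim V=1+(|\Omega|-1)^2 .
\]

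Next I would invoke the equality case of the ratio bound. Because $S$ meets the bound, $\chi_S-\tfrac{|S|}{|G|}\mathbf{1}$ lies in the least-eigenvalue eigenspace $U_\tau=\bigoplus_{\lambda_\psi=\tau}(\psi\text{-block})$. Since $\tfrac{|S|}{|G|}\mathbf{1}\in\langle\mathbf{1}\rangle\subseteq V$ and $U_\tau$ contains no trivial component, the conjecture is equivalent to showing that $\chi_S-\tfrac{|S|}{|G|}\mathbf{1}$ has vanishing projection onto every $\psi$-block with $\psi\neq\chi$ and $\lambda_\psi=\tau$. In particular, if $\chi$ is the \emph{unique} nonprincipal irreducible character attaining the minimum eigenvalue, then $U_\tau$ equals the $\chi$-block and the conjecture follows at once. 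Thus everything reduces to a purely spectral statement reinforced by a ``no stray component'' phenomenon.

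The main obstacle is exactly the groups for which $\tau$ is attained by some $\psi\neq\chi$: there the ratio bound alone permits $\chi_S$ to acquire a component in the $\psi$-block, and something finer must force it to vanish. My plan to handle this is twofold and leans on the classification of finite $2$-transitive groups. For the bulk of the families I would prove the sharper spectral statement that $\chi$ is the unique nonprincipal minimiser, extending the eigenvalue estimates already needed for Theorem~\ref{main}; this settles the symmetric, alternating, most affine, and generic Lie-type cases simultaneously. For the residual families admitting genuine eigenvalue ties, I would argue directly that a $0/1$-vector supported on a maximum coclique and lying in $\langle\mathbf{1}\rangle\oplus U_\tau$ cannot acquire support off $V$, combining the integrality of $\chi_S$ with the module structure and, where available, the known classification of the maximum cocliques of that specific group. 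I expect this last point — ruling out stray $\psi$-components for the finitely many exceptional families with eigenvalue ties — to be the genuinely hard and presently incomplete step, which is precisely why the statement is posed as a conjecture rather than a theorem.
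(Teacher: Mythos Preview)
The statement is Conjecture~\ref{newconjecture}, which the paper explicitly leaves open; there is no proof in the paper to compare against. The paper does verify the conjecture under the hypothesis of Lemma~\ref{lemma:8}, and your identification of $V$ with $\langle\mathbf{1}\rangle\oplus(\psi\text{-isotypic block})$ (where $\psi$ is what you call $\chi$), together with the equality case of the ratio bound, correctly recovers that mechanism.

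Your reduction, however, contains a false step. You assert that Theorem~\ref{main} combined with the Hoffman bound forces $\lambda_\psi=-|\mathcal{D}|/(|\Omega|-1)$ to equal the least eigenvalue $\tau$ of $\Gamma_G$ and the bound to be tight. It does not: knowing that a coclique of size $|G|/|\Omega|$ exists yields only $\tau\le\lambda_\psi$, which is automatic since $\tau$ is a minimum. The paper explicitly exhibits the Higman--Sims group on $176$ points as a counterexample, where $\lambda_\psi=-79806$ but $\tau=-118650$ (afforded by an irreducible character of degree $22$). In such cases the Hoffman bound is strictly slack, its equality clause is vacuous, and there is no reason for $\chi_S-\tfrac{|S|}{|G|}\mathbf{1}$ to lie in $U_\tau$; indeed $U_\tau$ does not even contain the $\psi$-block, so your framework collapses there rather than merely acquiring ``stray components''. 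Thus your diagnosis of the obstacle as ``eigenvalue ties at $\tau$'' is too narrow: the harder phenomenon is $\tau<\lambda_\psi$ strictly, which occurs for $HS$, $\mathrm{PSU}_3(q)$, and $\Sp_{2n}(2)$. The paper handles Theorem~\ref{main} for these families via \emph{weighted} adjacency matrices (Lemma~\ref{weightedratio}); when the weighted minimum is uniquely realised by $\psi$ the same eigenspace argument would recover the conjecture, but for instance the weighting chosen for $\mathrm{PSU}_3(q)$ still ties $\psi$ with characters of degree $q(q-1)$ and (for some $q$) $q^2-q+1$, so even that route leaves the conjecture open. Your plan should be rebuilt around weighted matrices from the outset, with the residual task being to find tie-free weightings for each remaining family or to supply the direct structural arguments you allude to.
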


\subsection{Comments on Theorem~\ref{main} and structure of the paper}
The proof of Theorem~\ref{main} depends upon the classification of the
finite $2$-transitive groups and hence on the classification of the
finite simple groups.

In Section~\ref{algebraicgraph}, we show that the problem of
determining the intersecting sets of a transitive group $G$ is
equivalent to the problem of determining the independent sets in the
Cayley graph over $G$ with connection set the derangements of $G$. We
then show some elementary results showing that the latter problem is
related to the character theory of $G$ and we prove some lemmas that
will be useful when $G$ is $2$-transitive.

In Section~\ref{reductios}, we reduce the proof of Theorem~\ref{main}
to the case that $G$ is one of the following non-abelian simple
groups: $\PSL_n(q)$, $\Sp_{2n}(2)$, $\mathrm{PSU}_3(q)$, Suzuki groups
$\Sz(q)$, Ree groups $\Ree(q)$ and the sporadic Higman-Sims group
$HS$. The proof of Theorem~\ref{main} for $\Sz(q)$, $\Ree(q)$, $HS$
and $\mathrm{PSU}_3(q)$ is in
Section~\ref{sec:suzuki},~\ref{sec:reegroups},~\ref{sec:HigmanSims}
and~\ref{sec:PSU} respectively and depends heavily on the character
table of these groups (unitary groups are by far the hardest case
here). Finally the proof of Theorem~\ref{main} for $\PSL_n(q)$ and
$\Sp_{2n}(2)$ is in Section~\ref{sec:PSL} and~\ref{Sec:symplectic}
respectively and depends on the theory of Weil representations of
these groups.

\section{Some algebraic graph theory}\label{algebraicgraph}

The problem of determining the intersecting sets of a permutation
group $G$ can be formulated in graph-theoretic terminology.  We denote
by $\Gamma_G$ the \emph{derangement graph} of $G$: the vertices of
this graph are the elements of $G$ and the edges are the unordered pairs
$\{g,h\}$ such that $gh^{-1}$ is a \emph{derangement}, that is,
$gh^{-1}$ fixes no point. Now, an intersecting set of $G$ is simply an
\emph{independent set} or a \emph{coclique} of $\Gamma_G$, and
(similarly) the classical Erd\H{o}s-Ko-Rado theorem translates into a
classification of the independent sets of maximal cardinality of the
Kneser graphs.

Since the right regular representation of $G$ is a subgroup of the
automorphism group of $\Gamma_G$, we see that $\Gamma_G$ is a
\textit{Cayley graph}. Namely, if $\mathcal{D}$ is the set of
derangements of $G$, then $\Gamma_G$ is the Cayley graph on $G$ with
connection set $\mathcal{D}$.
Clearly, $\mathcal{D}$ is a union of $G$-conjugacy classes, so
$\Gamma_G$ is a \emph{normal} Cayley graph.

As usual, we simply say that the complex number $\xi$ is an
\emph{eigenvalue} of the graph $\Gamma$ if $\xi$ is an eigenvalue of
the adjacency matrix of $\Gamma$.  We use $\Irr(G)$ to denote the
\emph{irreducible complex characters} of the group $G$ and given
$\chi\in\Irr(G)$ and a subset $S$ of $G$ we write 
\[
\chi(S)=\sum_{s\in S}\chi(s).
\]
In the following lemma we recall that the eigenvalues of a normal
Cayley graph on $G$ are determined by the irreducible complex
characters of $G$. This a well-known result, we refer the reader to
Babai's work~\cite{Ba} and a proof is also given in~\cite[Section
11.12]{EKRbook}.

\begin{lemma}\label{eigenvalues}
  Let $G$ be a permutation group on $\Omega$ and let $\mathcal{D}$ be
  the set of derangements of $G$. The spectrum of the graph $\Gamma_G$
  is $\{\chi(\mathcal{D})/\chi(1)\mid \chi\in \Irr(G)\}$. Also, if
  $\tau$ is an eigenvalue of $\Gamma_G$ and $\chi_1,\ldots,\chi_s$ are
  the irreducible characters of $G$ such that
  $\tau=\chi_i(\mathcal{D})/\chi_i(1)$, then the dimension of the
  $\tau$-eigenspace of $\Gamma_G$ is $\sum_{i=1}^s\chi_i(1)^2.$
\end{lemma}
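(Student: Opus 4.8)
The plan is to compute the eigenvalues of the adjacency matrix of $\Gamma_G$ by exploiting the fact that $\Gamma_G$ is a normal Cayley graph, so the adjacency operator is right-multiplication by the element $\sum_{d \in \cD} d$ in the group algebra $\bbC G$. The key observation is that this element is central in $\bbC G$, since $\cD$ is a union of conjugacy classes. Central elements of the group algebra act as scalars on each irreducible representation, and these scalars are precisely the eigenvalues we seek.

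Let me outline the steps. First, I would recall the decomposition of the regular representation $\bbC G = \bigoplus_{\chi \in \Irr(G)} M_\chi$, where $M_\chi$ is the $\chi$-isotypic component, which is a matrix algebra of dimension $\chi(1)^2$ and affords the irreducible character $\chi$ with multiplicity $\chi(1)$. The adjacency matrix of $\Gamma_G$ acts on $\bbC G$ (viewed as the vector space spanned by the vertices) as right-multiplication by $z = \sum_{d \in \cD} d$. Second, since $z$ lies in the centre of $\bbC G$, Schur's Lemma implies that on each irreducible summand corresponding to $\chi$, the element $z$ acts as a scalar $\omega_\chi$. The standard computation of this central character gives
\[
\omega_\chi = \frac{1}{\chi(1)} \sum_{d \in \cD} \chi(d) = \frac{\chi(\cD)}{\chi(1)},
\]
which one obtains by taking the trace of the action of $z$ on the irreducible module of dimension $\chi(1)$ and dividing by $\chi(1)$. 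Third, because the $\chi$-isotypic component of $\bbC G$ has dimension $\chi(1)^2$ and $z$ acts as the scalar $\omega_\chi$ on all of it, this entire component lies in the $\omega_\chi$-eigenspace of the adjacency matrix. Collecting all irreducible characters $\chi_1, \ldots, \chi_s$ that produce the same eigenvalue $\tau$, the corresponding eigenspace has dimension $\sum_{i=1}^s \chi_i(1)^2$.

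The main obstacle, and the point requiring the most care, is the bookkeeping connecting the abstract module-theoretic picture to the concrete adjacency matrix of the Cayley graph. One must verify that the eigenvalues of right-multiplication by $z$ on $\bbC G$ genuinely coincide with the eigenvalues of the (symmetric) adjacency matrix of $\Gamma_G$; here one uses that $\cD$ is inverse-closed, so $\chi(\cD)$ is real and the operator is self-adjoint, consistent with the graph being undirected. A subtle point is that each $\omega_\chi$ is the eigenvalue, but the dimension count must aggregate the full isotypic component of dimension $\chi(1)^2$ rather than just the $\chi(1)$-dimensional irreducible module. Since this is a well-known result, I would keep the argument brief and, as the authors indicate, defer to Babai~\cite{Ba} or~\cite[Section 11.12]{EKRbook} for the routine verifications, presenting only the central-character computation that yields the stated formula $\tau = \chi(\cD)/\chi(1)$ and the eigenspace dimension $\sum_{i=1}^s \chi_i(1)^2$.
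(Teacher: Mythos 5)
Your argument is correct and is essentially the standard proof that the paper itself defers to (Babai~\cite{Ba} and \cite[Section 11.12]{EKRbook}): the adjacency operator of a normal Cayley graph is right-multiplication by the central element $\sum_{d\in\mathcal{D}}d$ of $\mathbb{C}G$, which by Schur's Lemma acts as the central character value $\chi(\mathcal{D})/\chi(1)$ on each isotypic component of dimension $\chi(1)^2$. Your attention to the bookkeeping (inverse-closedness of $\mathcal{D}$, isotypic component versus irreducible module) covers exactly the routine verifications the cited sources carry out, so there is nothing to add.
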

For simplicity, we will write $\lambda(\chi)$ for the eigenvalue of
$\Gamma_G$ afforded by the irreducible character $\chi$, that is,
\begin{equation}\label{equation:-10}
\lambda(\chi)=\frac{\chi(\mathcal{D})}{\chi(1)}=\frac{1}{\chi(1)}\sum_{g\in\mathcal{D}}\chi(g).
\end{equation}

The next result is the well-known \emph{ratio-bound} for independent
sets in a graph; for a proof tailored to our needs see for example~\cite[Lemma~$3$]{KaPa}.

\begin{lemma}\label{lemma:7}
Let $G$ be a permutation group, let $\tau$ be the minimum eigenvalue of
$\Gamma_G$, let $d$ be the valency of $\Gamma_G$ and let $S$ be an independent set of $\Gamma_G$.  Then
\[
\frac{|S|}{|G|}\leq \left(1-\frac{d}{\tau}\right)^{-1}.
\] 
If the equality is met, then $\chi_S-\frac{|S|}{|G|}\chi_{G}$ is an
eigenvector of $\Gamma_G$ with eigenvalue $\tau$.
\end{lemma}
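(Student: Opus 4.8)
The plan is to run the standard spectral (Hoffman/ratio) argument, exploiting that $\Gamma_G$ is a normal Cayley graph and hence its adjacency matrix $A$ is real and symmetric: the connection set $\mathcal{D}$ is closed under inversion (an element is a derangement if and only if its inverse is), so $A$ is symmetric and the spectral theorem supplies an orthonormal eigenbasis. Since $\Gamma_G$ is $d$-regular, the all-ones vector $\chi_G$ is an eigenvector with eigenvalue $d$, and $d$ is the largest eigenvalue. I would work throughout in $V=\mathbb{C}G$ equipped with the inner product $\langle g,h\rangle=\delta_{g,h}$, so that $\langle\chi_S,\chi_S\rangle=|S|$ and $\langle\chi_G,\chi_G\rangle=|G|$.

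First I would record the single combinatorial input: because $S$ is independent, no edge of $\Gamma_G$ has both endpoints in $S$, whence $\langle\chi_S,A\chi_S\rangle=0$. Next I would split off the component of $\chi_S$ along $\chi_G$, writing
\[
\chi_S=\frac{|S|}{|G|}\chi_G+y,\qquad y:=\chi_S-\frac{|S|}{|G|}\chi_G,
\]
and check that $y\perp\chi_G$ (this uses $\langle\chi_S,\chi_G\rangle=|S|$). Using $A\chi_G=d\chi_G$ together with symmetry of $A$ and $\langle\chi_G,y\rangle=0$, the cross terms vanish and I obtain
\[
0=\langle\chi_S,A\chi_S\rangle=\frac{|S|^2}{|G|^2}\,d\,|G|+\langle y,Ay\rangle.
\]

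The heart of the argument is then the inequality $\langle y,Ay\rangle\ge\tau\langle y,y\rangle$, which holds for every vector because $\tau$ is the minimum eigenvalue of the symmetric operator $A$. Substituting $\langle y,y\rangle=|S|-|S|^2/|G|$ and dividing through by $|S|$ yields $\tfrac{|S|}{|G|}(\tau-d)\ge\tau$. The one subtlety, and the closest thing to an obstacle, is a sign check: since $\Gamma_G$ has edges, $\tau<0$ and so $\tau-d<0$; dividing by $\tau-d$ therefore reverses the inequality and gives $\tfrac{|S|}{|G|}\le\tau/(\tau-d)=(1-d/\tau)^{-1}$, as claimed. Finally, for the equality statement I would trace back where slack was introduced: the only inequality used is $\langle y,Ay\rangle\ge\tau\langle y,y\rangle$, and for a symmetric operator this is an equality precisely when $y$ lies in the $\tau$-eigenspace. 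Hence, when the bound is attained, $y=\chi_S-\tfrac{|S|}{|G|}\chi_G$ is an eigenvector of $\Gamma_G$ with eigenvalue $\tau$.
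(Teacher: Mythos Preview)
Your proof is correct and is precisely the standard Hoffman ratio-bound argument. The paper does not actually prove this lemma; it simply cites \cite[Lemma~3]{KaPa} (and, elsewhere, \cite[Section~2.4]{EKRbook}) for a proof, so there is nothing to compare against beyond noting that your argument is the expected one found in those references.
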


At this point it is worthwhile to give a hint on (a simplified version
of) one of our main strategies in proving Theorem~\ref{main}. Let $G$
be a $2$-transitive group on $\Omega$ and let $\pi$ be its permutation
character. Write $\pi=\chi_0+\psi$, where $\chi_0$ is the principal
character of $G$. Observe that $\psi$ is an irreducible character of
$G$ because $G$ is $2$-transitive.

We will show that in many cases the minimal eigenvalue of $\Gamma_G$
is afforded by the irreducible complex character $\psi$, that is, the
minimal eigenvalue of $\Gamma_G$ is $\lambda(\psi)$, and hence this
minimum is
\[
\lambda(\psi)=\frac{1}{\psi(1)}\sum_{g\in\mathcal{D}}\psi(g)=\frac{1}{|\Omega|-1}\sum_{g\in
  \mathcal{D}}-1=-\frac{|\mathcal{D}|}{|\Omega|-1}.
\]
In this case, using Lemma~\ref{lemma:7}, we get that an independent set of
$\Gamma_G$ has cardinality at most 
$$|G|\left(1-\frac{|\mathcal{D}|}{\lambda(\psi)}\right)^{-1}=|G|\left(1-\frac{|\mathcal{D}|}{-|\mathcal{D}|/(|\Omega|-1)}\right)^{-1}=\frac{|G|}{|\Omega|}$$
and hence Theorem~\ref{main} holds for $G$.

Actually, sometimes a slightly stronger version of this strategy
works, which allows us to prove Conjecture~\ref{newconjecture} for
these cases.  In fact, assume that the minimal eigenvalue of
$\Gamma_G$ is $-|\mathcal{D}|/(|\Omega|-1)$ and that $\psi$ is the
unique irreducible character of $G$ affording this eigenvalue. Then,
combining Lemmas~\ref{eigenvalues} and~\ref{lemma:7}, we see that
$$W=\langle \chi_S\mid S\textrm{ an independent set of }\Gamma_G\textrm{ of cardinality }|G|/|\Omega|\rangle$$
is a subspace of $\mathbb{C}G$ with dimension $\chi_0(1)^2+\psi(1)^2=1+(|\Omega|-1)^2$. Now,~\cite[Proposition~$3.2$]{KaPa} and also~\cite{AhMe} show that 
$$V=\langle \chi_S\mid S\textrm{ coset of the stabiliser of a point}\rangle$$
has also dimension $1+(|\Omega|-1)^2$. Since $W\leq V$, we get $W=V$
and hence Conjecture~\ref{newconjecture} holds for $G$.

For future reference we highlight the argument in the previous paragraph in the following:
\begin{lemma}\label{lemma:8}
  Let $G$ be a $2$-transitive group on $\Omega$ and let $\mathcal{D}$
  be the set of derangements of $G$. If the minimum eigenvalue of
  $\Gamma_G$ is $-|\mathcal{D}|/(|\Omega|-1)$ and if $G$ has a unique
  irreducible character realising this minimum, then both
  Theorem~$\ref{main}$ and Conjecture~$\ref{newconjecture}$ hold for
  $G$.
\end{lemma}

The following will prove useful in various occasions.

\begin{lemma}\label{critical}
  Let $G$ be a $2$-transitive group on $\Omega$, let $\mathcal{D}$ be
  the set of derangements of $G$ and let $\pi=\chi_0+\psi$ be the
  permutation character of $G$ with $\chi_0$ the principal character
  of $G$. Let $\chi^*$ be an irreducible complex character of $G$
  with $\chi^*\neq \psi$. If $\lambda(\chi^*)$ is the minimum
  eigenvalue of $\Gamma_G$, then
$$\chi^*(1)\leq (|\Omega|-1)\sqrt{\frac{|G|}{|\mathcal{D}|}-2}.$$
\end{lemma}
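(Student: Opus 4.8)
The plan is to extract everything about $\chi^*$ from a single second-moment identity for the spectrum of $\Gamma_G$, together with the fact that $\lambda(\chi^*)$ is as negative as possible. First I would record the two eigenvalues that are already understood: the principal character gives $\lambda(\chi_0)=\chi_0(\mathcal{D})/\chi_0(1)=|\mathcal{D}|$, which is just the valency of $\Gamma_G$, while the computation in the text gives $\lambda(\psi)=-|\mathcal{D}|/(|\Omega|-1)$, since every derangement lies outside the support of $\pi$ and hence $\psi(g)=\pi(g)-1=-1$ for $g\in\mathcal{D}$.

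The key step is the trace identity $\mathrm{tr}(A^2)=\sum_{\chi\in\Irr(G)}\chi(1)^2\lambda(\chi)^2$, where $A$ denotes the adjacency matrix of $\Gamma_G$: by Lemma~\ref{eigenvalues} the right-hand side is precisely the sum of the squares of the eigenvalues of $A$ weighted by their multiplicities. On the other hand, $\mathrm{tr}(A^2)$ counts the ordered pairs $(g,h)$ with $gh^{-1}\in\mathcal{D}$, and so equals $|G|\,|\mathcal{D}|$. The contributions of $\chi_0$ and of $\psi$ to the sum are each exactly $|\mathcal{D}|^2$. Crucially, $\chi^*$ is distinct from both of these: it differs from $\psi$ by hypothesis, and it cannot equal $\chi_0$ because the minimum eigenvalue satisfies $\lambda(\chi^*)\le\lambda(\psi)<0$, whereas $\lambda(\chi_0)=|\mathcal{D}|>0$. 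Discarding all the remaining nonnegative terms therefore yields
$$
\chi^*(1)^2\,\lambda(\chi^*)^2+2|\mathcal{D}|^2\;\le\;|G|\,|\mathcal{D}|,
$$
so that $\chi^*(1)^2\,\lambda(\chi^*)^2\le|\mathcal{D}|^2\bigl(|G|/|\mathcal{D}|-2\bigr)$.

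To finish, I would again use that $\lambda(\chi^*)$ is the minimum eigenvalue, whence $\lambda(\chi^*)\le\lambda(\psi)=-|\mathcal{D}|/(|\Omega|-1)$ and therefore $\lambda(\chi^*)^2\ge|\mathcal{D}|^2/(|\Omega|-1)^2$. Substituting this lower bound for $\lambda(\chi^*)^2$ into the previous inequality, cancelling a factor of $|\mathcal{D}|^2$, and taking square roots gives exactly $\chi^*(1)\le(|\Omega|-1)\sqrt{|G|/|\mathcal{D}|-2}$. I do not anticipate a genuine obstacle; the only point requiring care is the bookkeeping that keeps $\chi_0$, $\psi$ and $\chi^*$ as three genuinely distinct summands, since the whole estimate rests on removing the two known contributions $|\mathcal{D}|^2+|\mathcal{D}|^2$ before bounding $\chi^*(1)$.
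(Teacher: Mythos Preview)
Your proposal is correct and is essentially the paper's own proof: both compute $\Tr(A^2)=|G||\mathcal{D}|=\sum_{\chi}\chi(\mathcal{D})^2$, strip off the two contributions $\chi_0(\mathcal{D})^2=\psi(\mathcal{D})^2=|\mathcal{D}|^2$, and then use $|\lambda(\chi^*)|\ge|\lambda(\psi)|$ to bound $\chi^*(1)$. The only cosmetic difference is that you phrase the sum as $\sum_\chi \chi(1)^2\lambda(\chi)^2$ while the paper writes it as $\sum_\chi \chi(\mathcal{D})^2$, which are of course identical.
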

\begin{proof}
  Let $A$ be the adjacency matrix of $\Gamma_G$, so $A$ is the
  $|G|\times |G|$-matrix with rows and columns labelled by the
  elements of $G$ defined by
\[
A_{g,h}=
\begin{cases}
1&\textrm{if }gh^{-1}\in\mathcal{D},\\
0&\textrm{if }gh^{-1}\notin\mathcal{D}.
\end{cases}
\]

Given a square matrix $X=(x_{i,j})_{i,j}$, we denote by
$\Tr(X)=\sum_{i}x_{i,i}$ the trace of $X$. A standard counting
argument gives
\begin{eqnarray*}\Tr(A^2)&=&\sum_{g\in G}(A^2)_{g,g}=\sum_{g\in G}\sum_{h\in G}A_{g,h}A_{h,g}\\
&=&\sum_{g\in G}|\{h\in G\mid gh^{-1}\in \mathcal{D}\}|=\sum_{g\in G}|\mathcal{D}|=|G||\mathcal{D}|.
\end{eqnarray*}
Now, the matrix $A$ (and hence $A^2$) can be diagonalised over $\mathbb{C}$ and it follows immediately from Lemma~\ref{eigenvalues} that
\begin{equation}\label{eq:1hh}
|G||\mathcal{D}|=\Tr(A^2)=\sum_{\chi\in \Irr(G)}\chi(1)^2\left(\frac{\chi(\mathcal{D})}{\chi(1)}\right)^2=\sum_{\chi\in \Irr(G)}\chi(\mathcal{D})^2.
\end{equation}

Since $\lambda(\chi^*)$ is the minimum eigenvalue of $\Gamma_G$, we have 
\[
\lambda(\chi^*)\leq \lambda(\psi)=-|\mathcal{D}|/(|\Omega|-1)<0
\]
and hence $\chi^*\neq \chi_0$.

All the terms in the summation in the right-hand side of
Eq.~\eqref{eq:1hh} are positive, so we get that
$|G||\mathcal{D}|\geq\chi_0(\mathcal{D})^2+\psi(\mathcal{D})^2+
\chi^*(\mathcal{D})^2$. Since
$\chi_0(\mathcal{D})=-\psi(\mathcal{D})=|\mathcal{D}|$, this becomes
$\chi^*(\mathcal{D})^2\leq
|G||\mathcal{D}|-2|\mathcal{D}|^2$. Thus
\begin{eqnarray}\label{eq:2hh}
|\lambda(\chi^*)|&=&\frac{|\chi^*(\mathcal{D})|}{\chi^*(1)}\leq\frac{|\mathcal{D}|}{\chi^*(1)}\sqrt{\frac{|G|}{|\mathcal{D}|}-2}=\frac{|\mathcal{D}|}{\psi(1)}\frac{\psi(1)}{\chi^*(1)}\sqrt{\frac{|G|}{|\mathcal{D}|}-2}
\\\nonumber
&=&|\lambda(\psi)|\frac{\psi(1)}{\chi^*(1)}\sqrt{\frac{|G|}{|\mathcal{D}|}-2}=|\lambda(\psi)|\frac{|\Omega|-1}{\chi^*(1)}\sqrt{\frac{|G|}{|\mathcal{D}|}-2}.
\end{eqnarray}
Now, $\lambda(\chi^*)\leq \lambda(\psi)$ and hence $|\lambda(\chi^*)|\geq |\lambda(\psi)|$. Thus the proof follows from  Eq.~\eqref{eq:2hh}. 
\end{proof}

Incidentally, the proof of Lemma~\ref{critical} shows that
$|G|/|\mathcal{D}|\geq 2$ and hence at most half of the elements of a
$2$-transitive group are derangements, this observation is a special
case of the main result in~\cite{GIS}.

Some suspicious reader, misguided by the comments preceding
Lemma~\ref{lemma:8}, might think that the hypothesis of
Lemma~\ref{critical} are never satisfied. In fact, by wishful
thinking, one tends to hope that
$\lambda(\psi)=-|\mathcal{D}|/(|\Omega|-1)$ is always the minimum
eigenvalue of $\Gamma_G$. However this is not the case. The simplest
example is given by the $2$-transitive action of the Higman-Sims group
$HS$ of degree $176$: here $\psi(1)=176-1=175$ and
$\lambda(\psi)=-79806$, but there exists an irreducible character
$\chi$ of degree $22$ with $\lambda(\chi)=-118650$.

In Sections~\ref{sec:HigmanSims},~\ref{sec:PSU} and~\ref{Sec:symplectic}, the Higman-Sims
group, $\mathrm{PSU}_3(q)$ and $\Sp_{2n}(2)$ are dealt with using a generalised
version of Lemma~\ref{lemma:7}, which we now discuss. Given a graph
$\Gamma$, a square matrix $A$ with rows and columns indexed by the
vertices of $\Gamma$ is said to be a \textit{weighted adjacency
  matrix} of $\Gamma$ if $A_{u,v}= 0$ whenever $u$ and $v$ are not
adjacent vertices. The ratio-bound (a.k.a.  Lemma~\ref{lemma:7}) holds
not just for the adjacency matrix, but also for a weighted adjacency
matrix (see~\cite[Section 2.4]{EKRbook} for a proof).

\begin{lemma}\label{weightedratio}
  Let $G$ be a permutation group and let $A$ be a weighted adjacency
  matrix of $\Gamma_G$. Let $d$ be the largest eigenvalue  and let $\tau$
be   the least eigenvalue of $A$. If $S$ is an independent set in $\Gamma_G$, then
\[
\frac{|S|}{|G|} \leq \left( 1 -\frac{d}{\tau} \right)^{-1}.
\]
\end{lemma}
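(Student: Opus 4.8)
The plan is to run the classical Hoffman ratio-bound argument with the weighted matrix $A$ in place of the ordinary adjacency matrix; this is the exact weighted analogue of the proof of Lemma~\ref{lemma:7}, so the structure mirrors the unweighted case. Throughout I would use that $A$ is real and symmetric with zero diagonal (it is a weighted adjacency matrix of the loopless graph $\Gamma_G$), and the essential structural input: that the all-ones vector $\mathbf{1}$ is an eigenvector of $A$ affording the \emph{largest} eigenvalue $d$. In the situations where we apply the lemma this is automatic, since there $A=\sum_{g\in G}a_g R_g$ for nonnegative, $G$-invariant weights $a_g$ (with $R_g$ the right regular representation): each $R_g$ fixes $\mathbf{1}$, so $A\mathbf{1}=\bigl(\sum_g a_g\bigr)\mathbf{1}$ with eigenvalue equal to the constant row sum; and as $A$ is then a nonnegative symmetric matrix possessing the positive eigenvector $\mathbf{1}$, that eigenvalue is forced to be the spectral radius, i.e.\ the top eigenvalue $d$. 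By the spectral theorem, $\mathbb{C}^{|G|}$ then has an orthonormal eigenbasis of $A$ containing $\mathbf{1}/\sqrt{|G|}$, and every eigenvalue lies in $[\tau,d]$.

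Next I would set $x=\chi_S$, the $0/1$ characteristic vector of the independent set $S$, and exploit independence. Since $S$ is a coclique of $\Gamma_G$, no two (not necessarily distinct) vertices of $S$ are adjacent, so $A_{u,v}=0$ for all $u,v\in S$ (the case $u=v$ using the zero diagonal); hence $x^{\top}Ax=\sum_{u,v\in S}A_{u,v}=0$. I would then decompose $x$ along $\mathbf{1}$ and its orthogonal complement. Because $\langle x,\mathbf{1}\rangle=|S|$ and $\|\mathbf{1}\|^{2}=|G|$, the orthogonal projection of $x$ onto $\mathbf{1}$ is $\frac{|S|}{|G|}\mathbf{1}$, so I can write $x=\frac{|S|}{|G|}\mathbf{1}+w$ with $w\perp\mathbf{1}$. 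The cross terms vanish since $A\mathbf{1}=d\mathbf{1}$ and $w\perp\mathbf{1}$, giving
\[
0=x^{\top}Ax=\frac{|S|^{2}}{|G|^{2}}\,\mathbf{1}^{\top}A\,\mathbf{1}+w^{\top}Aw=\frac{|S|^{2}}{|G|}\,d+w^{\top}Aw .
\]

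To finish I would estimate the quadratic form on $w$. As $w$ lies in the $A$-invariant subspace $\mathbf{1}^{\perp}$, on which every eigenvalue is at least $\tau$, the Rayleigh quotient gives $w^{\top}Aw\ge\tau\|w\|^{2}$; and by Pythagoras together with $\|x\|^{2}=|S|$ (as $x$ is a $0/1$ vector) one has $\|w\|^{2}=|S|-|S|^{2}/|G|$. Substituting,
\[
0\ge\frac{|S|^{2}}{|G|}\,d+\tau\Bigl(|S|-\frac{|S|^{2}}{|G|}\Bigr),
\]
and dividing by $|S|>0$ and rearranging (using $\tau<0\le d$, so $d-\tau>0$) yields $\frac{|S|}{|G|}\le\frac{-\tau}{d-\tau}=\bigl(1-\frac{d}{\tau}\bigr)^{-1}$, as claimed. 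The only genuinely delicate point, and the step I would take care to pin down, is the structural claim that $\mathbf{1}$ affords the top eigenvalue $d$: without it the cross terms need not vanish and the identity $\mathbf{1}^{\top}A\mathbf{1}=d\,|G|$ fails, so the bound could break. Everything else is the standard Rayleigh-quotient estimate, identical in spirit to the equality clause already used in Lemma~\ref{lemma:7}.
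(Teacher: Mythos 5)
Your proof is correct and is precisely the standard Hoffman ratio-bound argument that the paper itself does not reproduce but delegates to \cite[Section 2.4]{EKRbook}, so there is no divergence in approach. You are also right to single out the implicit hypothesis that the all-ones vector affords the top eigenvalue $d$: the paper's statement of the lemma omits this, but it holds for every ${\bf a}$-weighted adjacency matrix actually used in the paper, since those matrices have nonnegative weights constant on conjugacy classes and hence constant row sums.
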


For the scope of this paper, we only consider very special types of
weighted adjacency matrices. Let $G$ be a permutation group, let
$\mathcal{D}$ be the set of derangements of $G$ and let
$\mathcal{C}_1,\ldots,\mathcal{C}_\ell$ be the $G$-conjugacy classes
contained in $\mathcal{D}$.  Given ${\bf
  a}=(a_1,\ldots,a_\ell)\in\mathbb{C}^\ell$, define the \textit{${\bf
    a}$-weighted adjacency matrix} of $\Gamma_G$ to be the matrix $A$
with rows and columns indexed by elements of $G$ and
\[A_{\sigma,\pi}=
\begin{cases}a_i& \textrm{if }\sigma^{-1}\pi \in \mathcal{C}_i,\\
0&\textrm{otherwise}.
\end{cases}
\]
It is implicit in the work of Babai in~\cite{Ba} (see
also~\cite[Section 11.12]{EKRbook}) that the eigenvalues of $A$ are determined by the
irreducible complex characters of $G$. In fact, as in
Eq.~\eqref{equation:-10}, given an irreducible character $\chi$ of
$G$, the eigenvalue of the ${\bf a}$-weighted adjacency matrix
determined by $\chi$ is
\begin{align}\label{eq:weightedevalues}
\lambda(\chi,{\bf a}) = \frac{1}{\chi(1)}\sum_{i=1}^\ell \left( a_i \sum_{x\in \mathcal{C}_i}\chi(x) \right)
                       =\frac{1}{\chi(1)} \sum_{i = 1}^\ell a_i |\mathcal{C}_i| \chi(x_i),
\end{align}
where $(x_i)_{i\in \{1,\ldots,\ell\}}$ is a set of representatives of
the $G$-conjugacy classes $ (C_i)_{i\in \{1,\ldots,\ell\}}$. Clearly,
when ${\bf a}=(1,\ldots,1)$, we recover the eigenvalues of the
adjacency matrix of $\Gamma_G$.

For later reference it is worth to point out that, if the weights
$a_i$ are non-negative real numbers, then the principal character will
give the largest eigenvalue of the ${\bf a}$-weighted adjacency
matrix. (This fact follows easily from Eq.~\eqref{eq:weightedevalues}
and from the inequality $|\chi(g)|\leq \chi(1)$, which is valid for
every $g\in G$ and for every irreducible character $\chi$.)

We conclude this section proving an analogue of Lemma~\ref{critical}.
\begin{lemma}\label{weightedcritical}
  Let $G$ be a $2$-transitive group on $\Omega$, let $\mathcal{D}$ be
  the set of derangements of $G$, let
  $\mathcal{C}_1,\ldots,\mathcal{C}_\ell$ be the $G$-conjugacy classes
  contained in $\mathcal{D}$ and let $\pi=\chi_0+\psi$ be the
  permutation character of $G$ with $\chi_0$ the principal character
  of $G$.
  
  Let $a_1,\ldots,a_\ell$ be non-negative real numbers not all equal
  to zero and set ${\bf a}=(a_1,\ldots,a_\ell)$, and let $A$ be the
  ${\bf a}$-adjacency matrix of $\Gamma_G$.
  
  If $\lambda(\chi^*,{\bf a})$ is the minimum eigenvalue of $A$
  for some irreducible character $\chi^*$ with $\chi^*\neq
  \psi$, then
\[
\chi^*(1) \leq (|\Omega|-1) \sqrt{|G| \frac{\sum_{i=0}^{\ell} a_i^2 |\mathcal{C}_i|}{(\sum_{i = 1}^\ell a_i |\mathcal{C}_i|)^2} -2 }.
\]
\end{lemma}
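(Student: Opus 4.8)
The plan is to transcribe the proof of Lemma~\ref{critical} almost verbatim, replacing the scalar $\chi(\mathcal{D})$ throughout by its weighted analogue
\[
\chi({\bf a}) := \sum_{i=1}^\ell a_i|\mathcal{C}_i|\chi(x_i),
\]
so that Eq.~\eqref{eq:weightedevalues} reads $\lambda(\chi,{\bf a}) = \chi({\bf a})/\chi(1)$. I will use throughout that $A$ is a real \emph{symmetric} matrix, which is precisely what makes Lemma~\ref{weightedratio} applicable; this symmetry amounts to assigning a derangement class $\mathcal{C}_i$ and its inverse $\mathcal{C}_i^{-1}$ the same weight, and it guarantees that every eigenvalue $\lambda(\chi,{\bf a})$, and hence every $\chi({\bf a}) = \lambda(\chi,{\bf a})\chi(1)$, is a real number.

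First I would compute $\Tr(A^2)$ in two ways, exactly as in Lemma~\ref{critical}. A direct count, using that $\sigma^{-1}\pi \in \mathcal{C}_i$ forces $\pi^{-1}\sigma \in \mathcal{C}_i^{-1}$ and that both carry the weight $a_i$, would give
\[
\Tr(A^2) = \sum_{\sigma,\pi\in G} A_{\sigma,\pi}A_{\pi,\sigma} = \sum_{\sigma\in G}\sum_{i=1}^\ell a_i^2|\mathcal{C}_i| = |G|\sum_{i=1}^\ell a_i^2|\mathcal{C}_i|.
\]
On the other hand, diagonalising $A$ and applying the weighted analogue of Lemma~\ref{eigenvalues} (the eigenvalue $\lambda(\chi,{\bf a})$ occurring with multiplicity summing the $\chi(1)^2$) would give $\Tr(A^2) = \sum_{\chi\in\Irr(G)}\chi(1)^2\lambda(\chi,{\bf a})^2 = \sum_{\chi\in\Irr(G)}\chi({\bf a})^2$. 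Equating the two expressions yields the master identity $|G|\sum_{i=1}^\ell a_i^2|\mathcal{C}_i| = \sum_{\chi\in\Irr(G)}\chi({\bf a})^2$, whose right-hand summands are all non-negative reals.

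Next I would evaluate the two distinguished characters. Setting $D := \sum_{i=1}^\ell a_i|\mathcal{C}_i|$, the constancy of $\chi_0$ gives $\chi_0({\bf a}) = D$, while $\psi(g) = \pi(g) - 1 = -1$ for every derangement $g$ gives $\psi({\bf a}) = -D$; note $D > 0$ since the $a_i$ are non-negative and not all zero. Discarding from the master identity all terms except those of $\chi_0$, $\psi$ and $\chi^*$ — which are three distinct characters, the last because $\lambda(\chi^*,{\bf a})$ is negative whereas $\lambda(\chi_0,{\bf a}) = D > 0$ — I obtain $\chi^*({\bf a})^2 \le |G|\sum_{i=1}^\ell a_i^2|\mathcal{C}_i| - 2D^2$, and therefore
\[
|\lambda(\chi^*,{\bf a})| = \frac{|\chi^*({\bf a})|}{\chi^*(1)} \le \frac{D}{\chi^*(1)}\sqrt{\,|G|\frac{\sum_{i=1}^\ell a_i^2|\mathcal{C}_i|}{\big(\sum_{i=1}^\ell a_i|\mathcal{C}_i|\big)^2}-2\,}.
\]

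Finally I would invoke minimality. Since $\lambda(\chi^*,{\bf a})$ is the least eigenvalue of $A$ and $\lambda(\psi,{\bf a}) = \psi({\bf a})/\psi(1) = -D/(|\Omega|-1) < 0$, we have $|\lambda(\chi^*,{\bf a})| \ge |\lambda(\psi,{\bf a})| = D/(|\Omega|-1)$. Combining this lower bound with the upper bound above, cancelling the common positive factor $D$ and rearranging, produces exactly the claimed inequality (the index $i=0$ in the statement being harmless, as it corresponds to the identity class with weight $a_0 = 0$). I expect the only genuinely delicate point to be the reality and non-negativity bookkeeping underlying the master identity: one must ensure that the weights are inverse-symmetric so that $A$ is symmetric, each $\lambda(\chi,{\bf a})$ is real, and each $\chi({\bf a})^2$ is a legitimate non-negative contribution to $\Tr(A^2)$. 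Everything else is a routine adaptation of the proof of Lemma~\ref{critical}.
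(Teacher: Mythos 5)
Your proof is correct and follows essentially the same route as the paper's: both compute $\Tr(A^2)$ two ways, discard all but the $\chi_0$, $\psi$ and $\chi^*$ terms, and compare $|\lambda(\chi^*,{\bf a})|$ with $|\lambda(\psi,{\bf a})|$. Your explicit attention to inverse-symmetry of the weights (so that $A$ is symmetric and each $\chi({\bf a})^2$ is a non-negative real) is a point the paper leaves implicit, but it does not change the argument.
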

\begin{proof}
  The proof is similar to the proof of Lemma~\ref{critical} and hence
  we omit most of the details. Eq.~\eqref{eq:weightedevalues} yields
\[
\lambda(\chi_0,{\bf a}) =\sum_{i = 1}^\ell a_i |\mathcal{C}_i|, \qquad
\lambda(\psi,{\bf a}) = \frac{-1}{|\Omega|-1} \sum_{i = 1}^\ell a_i |\mathcal{C}_i|.
\]

Similar to the proof of Lemma~\ref{critical},
\[
\Tr(A^2) = |G| \sum_{i=0}^{\ell} a_i^2 |\mathcal{C}_i|.
\]
The trace of $A^2$ is also equal to the sum of the squares of the eigenvalues of $A$.
Thus, just as in the proof of Lemma~\ref{critical}, this implies that
\[
\chi^*(1)^2 \lambda(\chi^*,{\bf a})^2 \leq  |G| \sum_{i=0}^{\ell} a_i^2 |\mathcal{C}_i| - 2\left(\sum_{i = 1}^\ell a_i |\mathcal{C}_i|\right)^2.
\]
Since $\lambda(\chi^*,{\bf a})$ is the minimum eigenvalue of $A$, we have $\lambda({\chi^*,{\bf a}})\leq \lambda(\psi,{\bf a})<0$ and hence $|\lambda(\psi,{\bar a})|\leq |\lambda(\chi^*,{\bf a})|$. Now, following the last  paragraph of the proof of Lemma~\ref{critical}, we obtain 
\[
\chi^*(1) \leq (|\Omega|-1) \sqrt{|G| \frac{\sum_{i=0}^{\ell} a_i^2 |\mathcal{C}_i|}{(\sum_{i = 1}^\ell a_i |\mathcal{C}_i|)^2} -2 }.
\]
\end{proof}


\section{Three reductions}\label{reductios}

Let $G$ be a finite $2$-transitive group on $\Omega$. A $1911$
celebrated theorem of Burnside shows that either $G$ contains a
regular subgroup or $G$ is almost simple,
see~\cite[Theorem~$4.1$B]{DM}. In the first case, $G$ contains a
subgroup $H$ with $H$ transitive on $\Omega$ and with every
non-identity element of $H$ being a derangement on $\Omega$. In
particular, $H$ is a clique of cardinality $|\Omega|$ in the
derangement graph $\Gamma_G$. It is then relevant for
Theorem~\ref{main} the following well-known lemma (usually referred to
as the clique-coclique bound, see for example~\cite[Section 2.1]{EKRbook}).

\begin{lemma}\label{cliquecoclique}
  Let $\Gamma$ be a finite graph of order $n$ having a group of
  automorphisms acting transitively on its vertices. Let $C$ be a
  clique of $\Gamma$ and let $S$ be a coclique of $\Gamma$. Then
  $|C||S|\leq n$ with equality if and only if $|C\cap S^g|=1$ for
  every $g\in G$.
\end{lemma}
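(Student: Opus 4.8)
The plan is to use the transitive automorphism group $G$ supplied by the hypothesis and to run a double-counting argument over the $G$-orbit of the coclique $S$. Two elementary structural facts come first. Since every $g\in G$ is an automorphism, it preserves adjacency, so it maps cocliques to cocliques; in particular each translate $S^g$ is again a coclique. Moreover, a clique and a coclique meet in at most one vertex: two common vertices would have to be simultaneously adjacent (as members of the clique $C$) and non-adjacent (as members of the coclique $S^g$), which is absurd. Hence
\[
|C\cap S^g|\le 1\qquad\text{for every }g\in G.
\]

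Next I would count the pairs $\{(g,x):g\in G,\ x\in C\cap S^g\}$ in two ways. Summing over $g$ first and using the bound just noted gives $\sum_{g\in G}|C\cap S^g|\le |G|$. Summing over $x\in C$ instead gives
\[
\sum_{g\in G}|C\cap S^g|=\sum_{x\in C}\bigl|\{g\in G:x\in S^g\}\bigr|=\sum_{x\in C}\sum_{s\in S}\bigl|\{g\in G:s^g=x\}\bigr|,
\]
where the last equality holds because, for a fixed $g$, at most one $s\in S$ can satisfy $s^g=x$ (namely $s=x^{g^{-1}}$), so the indicator of the event $x\in S^g$ splits as the sum over $s\in S$ of the indicators of $s^g=x$.

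The remaining step is to evaluate the inner fibre. As $G$ is transitive on the $n$ vertices, the orbit-stabiliser theorem gives $|G_s|=|G|/n$ for each vertex $s$, and for any target $x$ the set $\{g:s^g=x\}$ is a coset of $G_s$, hence of size exactly $|G|/n$ (it is in particular non-empty, which is precisely where transitivity enters). Substituting, the double count equals $|C|\,|S|\,|G|/n$, and comparing with the upper bound $|G|$ yields $|C|\,|S|\le n$. For the equality clause I would note that each summand $|C\cap S^g|$ lies in $\{0,1\}$, so a sum of $|G|$ such terms attains the value $|G|$ exactly when every term equals $1$; thus $|C|\,|S|=n$ if and only if $|C\cap S^g|=1$ for all $g\in G$. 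I do not expect a real obstacle here: the only point needing care is the bookkeeping that passes from $x\in S^g$ to $s^g=x$ and the verification that each fibre has size $|G|/n$, after which the bound and its equality case fall out together from the $\{0,1\}$-valued summands.
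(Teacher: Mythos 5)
Your double-counting argument is correct and complete: the intersection bound $|C\cap S^g|\le 1$, the fibre count $|\{g: s^g=x\}|=|G|/n$ via transitivity and orbit--stabiliser, and the equality analysis via the $\{0,1\}$-valued summands all check out. The paper does not prove this lemma at all (it cites \cite[Section 2.1]{EKRbook} for the well-known clique--coclique bound), and your argument is precisely the standard proof for vertex-transitive graphs, so nothing further is needed.
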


In particular, Theorem~\ref{main} follows immediately from
Lemma~\ref{cliquecoclique} for $2$-transitive groups having a regular
subgroup. Therefore, for the rest of this paper, we assume that $G$ is
almost simple, that is, $G$ contains a non-abelian simple normal
subgroup $S$ and $G$ acts faithfully by conjugation on $S$. Thus,
identifying $S$ as a subgroup of $\Aut(S)$, we have $$S\leq G\leq
\Aut(S),$$ for some non-abelian simple group $S$. This is our first reduction.

The classification of finite simple groups has allowed for the
complete classification the finite $2$-transitive groups. For the
reader's convenience we report in Table~\ref{table0} such
classification extracted from~\cite[page~197]{Ca}.

\begin{table}[!h]
\begin{tabular}{ccccc}\hline
Line&Group $S$&Degree&Condition on $G$&  Remarks\\\hline
1&$\Alt(n)$&$n$&$\Alt(n)\leq G\leq \Sym(n)$&$n\geq 5$\\
2&$\PSL_n(q)$&$\frac{q^n-1}{q-1}$&$\PSL_n(q)\leq G\leq \mathrm{P}\Gamma\mathrm{L}_n(q)$&$n\geq 2$, $(n,q)\neq(2,2),(2,3)$\\
3&$\Sp_{2n}(2)$&$2^{n-1}(2^n-1)$&$G=S$&$n\geq 3$\\
4&$\Sp_{2n}(2)$&$2^{n-1}(2^n+1)$&$G=S$&$n\geq 3$\\
5&$\mathrm{PSU}_3(q)$&$q^3+1$&$\mathrm{PSU}_3(q)\leq G\leq \mathrm{P}\Gamma\mathrm{U}_3(q)$&$q\neq 2$\\
6&$\Sz(q)$&$q^2+1$&$\Sz(q)\leq G\leq \Aut(\Sz(q))$&$q=2^{2m+1}$, $m>0$\\
7&$\Ree(q)$&$q^3+1$&$\Ree(q)\leq G\leq \Aut(\Ree(q))$&$q=3^{2m+1}$, $m> 0$\\
8&$M_{n}$&$n$&$M_n\leq G\leq \Aut(M_n)$&$n\in \{11,12,22,23,24\}$,\\
&&&& $M_n$ Mathieu group,\\
&&&& $G=S$ or $n=22$\\
9&$M_{11}$&$12$&$G=S$&\\
10&$\PSL_2(11)$&$11$&$G=S$&\\
11&$\Alt(7)$&$15$&$G=S$&\\
12&$\PSL_2(8)$&$28$&$G=\mathrm{P}\Sigma\mathrm{L}_2(8)$&\\
13&$HS$&$176$&$G=S$&$HS$ Higman-Sims group\\
14&$Co_3$&$276$&$G=S$&$Co_3$ third Conway group\\\hline
\end{tabular}
\caption{Finite $2$-transitive groups of almost simple type}\label{table0}
\end{table}


An easy computation with a computer (using the computer algebra system
\texttt{Magma}~\cite{magma}) allows as to show that Theorem~\ref{main}
holds true for many of the lines in Table~\ref{table0}.
\begin{proposition}\label{reduction2}
  Let $G$ be a $2$-transitive group as in lines~$1$,~$8$--$12$,
  or~$14$ of Table~$\ref{table0}$. Then Theorem~$\ref{main}$ holds for
  $G$.
\end{proposition}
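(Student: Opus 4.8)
The statement collects one infinite family and a finite list of sporadic cases, which I would treat by completely different means. Line~$1$ is the family $\Alt(n)\leq G\leq\Sym(n)$ with $n\geq 5$, so $G$ is either $\Sym(n)$ or $\Alt(n)$ acting naturally on $\Omega=\{1,\dots,n\}$. Here Theorem~\ref{main} only claims that a maximal intersecting set has cardinality $|G|/|\Omega|$, namely $(n-1)!$ for $\Sym(n)$ and $(n-1)!/2$ for $\Alt(n)$, the order of a point stabiliser. For $\Sym(n)$ this is the Erd\H{o}s--Ko--Rado theorem for permutations of~\cite{CaKu,LaMa}, reproved character-theoretically in~\cite{GoMe}; for $\Alt(n)$ it is established in~\cite{AhMeAlt}. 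As these references determine the cardinality (and more), line~$1$ requires no new argument.

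The remaining groups in lines~$8$--$12$ and~$14$ form a finite, explicit list, and for each I would verify the hypothesis of Lemma~\ref{lemma:7} computationally in \texttt{Magma}, working with the character table rather than with $\Gamma_G$ itself --- indispensable for $M_{24}$ and $Co_3$, whose derangement graphs are enormous but whose character tables are small. For a fixed $G$ the recipe is: (i) compute the permutation character $\pi$ and select the conjugacy classes $\mathcal{C}_1,\dots,\mathcal{C}_\ell$ on which $\pi$ vanishes, giving $\mathcal{D}=\bigcup_{i}\mathcal{C}_i$; (ii) for each $\chi\in\Irr(G)$ evaluate $\lambda(\chi)=\chi(\mathcal{D})/\chi(1)$ via $\chi(\mathcal{D})=\sum_{i=1}^{\ell}|\mathcal{C}_i|\chi(x_i)$, as in Eq.~\eqref{equation:-10} and Lemma~\ref{eigenvalues}; and (iii) check that $\min_{\chi}\lambda(\chi)$ equals $\lambda(\psi)=-|\mathcal{D}|/(|\Omega|-1)$. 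Granting (iii), Lemma~\ref{lemma:7} with valency $d=|\mathcal{D}|$ and least eigenvalue $\tau=\lambda(\psi)$ yields $|S|/|G|\leq(1-d/\tau)^{-1}=1/|\Omega|$ for every intersecting set $S$; a point stabiliser meets this bound, so Theorem~\ref{main} holds for $G$.

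The one place where something can go wrong --- and the reason these lines are separated from $HS$, $\mathrm{PSU}_3(q)$ and $\Sp_{2n}(2)$, which are treated later by hand --- is step~(iii): it is not automatic that $\psi$ affords the minimum eigenvalue, as the Higman--Sims group on $176$ points already shows. My expectation, matching the description of the computation as ``easy'', is that for every group in lines~$8$--$12$ and~$14$ the plain minimum is indeed $\lambda(\psi)$, so that the unweighted ratio bound closes the argument. Were (iii) to fail for some group on the list, the contingency plan would be to pass to an ${\bf a}$-weighted adjacency matrix with non-negative weights and invoke Lemma~\ref{weightedratio} instead, choosing ${\bf a}$ so that $\psi$ again realises the least eigenvalue; but I would expect the direct check to make this unnecessary.
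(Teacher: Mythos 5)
Your proposal is correct and follows essentially the same route as the paper: line~$1$ is dispatched by citing the known Erd\H{o}s--Ko--Rado theorems for $\Sym(n)$ and $\Alt(n)$, and lines~$8$--$12$ and~$14$ are handled by computing the spectrum of $\Gamma_G$ from the character table via Lemma~\ref{eigenvalues}, verifying that the minimum eigenvalue is $-|\mathcal{D}|/(|\Omega|-1)$, and applying the ratio bound of Lemma~\ref{lemma:7}. The only cosmetic difference is the reference used for the alternating group (the paper cites~\cite{MR2302532} alongside~\cite{CaKu,GoMe,LaMa}, while you cite~\cite{AhMeAlt}), and your contingency plan of passing to a weighted adjacency matrix turns out to be unnecessary, exactly as you anticipated.
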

\begin{proof}
  If $G$ is as in line~$1$, then $G=\Alt(n)$ or $G=\Sym(n)$. In both
  cases from~\cite{CaKu,GoMe,MR2302532,LaMa}, every intersecting set
  of maximal size of $G$ is the coset of the stabiliser of a point and
  hence the lemma follows immediately.

  Now for every group $G$ as in lines~8--12 or~14 of
  Table~$\ref{table0}$ we may compute the minimum eigenvalue of
  $\Gamma_G$ using Lemma~\ref{eigenvalues} and the computer algebra
  system \texttt{Magma} (or the character tables in~\cite{ATLAS}). In
  all cases, we see that the minimum eigenvalue is
  $-|\mathcal{D}|/(|\Omega|-1)$, where $\mathcal{D}$ is the set of
  derangements of $G$ and $\Omega$ is the set acted upon by $G$. Thus
  the proposition follows from Lemma~\ref{lemma:7}.

\end{proof}

In particular, Proposition~\ref{reduction2} is our second reduction
and allows us to consider only lines~$2$,~$3$,~$4$,~$5$,~$6$,~$7$
and~$13$ of Table~\ref{table0}.

\begin{proposition}\label{prop:1}Let $G$ be a transitive group on
  $\Omega$ and let $H$ be a transitive subgroup of $G$. Suppose that
  every intersecting set of $H$ has cardinality at most
  $|H|/|\Omega|$, then every intersecting set of $G$ has cardinality
  at most $|G|/|\Omega|$.
\end{proposition}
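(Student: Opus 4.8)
The plan is to exploit the coset structure of $H$ in $G$ together with the fact that being intersecting is a condition on quotients $gh^{-1}$. Fix a set of right coset representatives $g_1,\ldots,g_t$ of $H$ in $G$, so that $G=\bigsqcup_{i=1}^{t}Hg_i$ with $t=|G|/|H|$, and let $S$ be an arbitrary intersecting set of $G$. The idea is to analyse $S$ one coset at a time: for each $i$ set $T_i=\{h\in H\mid hg_i\in S\}$, so that $|S|=\sum_{i=1}^t|S\cap Hg_i|=\sum_{i=1}^t|T_i|$.

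First I would check that each $T_i$ is an intersecting set of $H$. Take $h,h'\in T_i$; then $hg_i,h'g_i\in S$, and since $S$ is intersecting the element $(hg_i)(h'g_i)^{-1}$ fixes some point of $\Omega$. But $(hg_i)(h'g_i)^{-1}=hg_ig_i^{-1}h'^{-1}=hh'^{-1}\in H$, so $hh'^{-1}$ fixes a point of $\Omega$. As $H$ and $G$ act on the same set $\Omega$, this is exactly the condition for $T_i$ to be intersecting as a subset of $H$. Hence the hypothesis applies and yields $|T_i|\le |H|/|\Omega|$ for every $i$.

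Summing these bounds over the $t=|G|/|H|$ cosets gives
\[
|S|=\sum_{i=1}^{t}|T_i|\le \frac{|G|}{|H|}\cdot\frac{|H|}{|\Omega|}=\frac{|G|}{|\Omega|},
\]
which is the desired conclusion.

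The argument is essentially elementary, so there is no serious obstacle; the only point requiring care is the second step, where one must observe that two elements of $S$ lying in the \emph{same} right coset $Hg_i$ have quotient already inside $H$, and that the notion of derangement is unchanged when passing from the $G$-action to its restriction to $H$ on the same $\Omega$. This proposition is exactly what is needed for the third reduction: since the non-abelian simple socle of an almost simple $2$-transitive group $G$ is itself transitive on $\Omega$, applying the proposition with $H$ equal to this socle lets us deduce the bound of Theorem~\ref{main} for $G$ from the same bound for the socle, so that it suffices to treat the case where $G$ is simple.
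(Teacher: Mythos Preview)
Your proof is correct and follows essentially the same approach as the paper: partition $G$ into right cosets of $H$, observe that the intersection of $S$ with a coset $Hg_i$, after right-translation by $g_i^{-1}$, is an intersecting set of $H$ (because $(hg_i)(h'g_i)^{-1}=hh'^{-1}$), apply the hypothesis, and sum. The only cosmetic difference is that the paper bounds every coset intersection by the largest one before applying the hypothesis just once, whereas you apply the hypothesis to each $T_i$ separately; the arithmetic is identical.
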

\begin{proof}
  Let $X$ be an intersecting set of maximal cardinality of $G$. Let
  $R$ be a set of representatives for the right cosets of $H$ in
  $G$. Thus $G=\cup_{r\in R}Hr$ and $X=\cup_{r\in R}(Hr\cap
  X)$. Choose $\bar{r}\in R$ with $|H\bar{r}\cap X|$ as large as
  possible. Observe that
\[
(H\bar{r}\cap X)\bar{r}^{-1}=H\cap X\bar{r}^{-1}
\] 
is an intersecting set of $H$, and hence 
\[
|H\bar{r}\cap X|=|H\cap X\bar{r}^{-1}|\leq |H|/|\Omega|.
\]
In particular, 
\[
|X|=\sum_{r\in R}|Hr\cap X|\leq |R||H\bar{r}\cap X|\leq |R||H|/|\Omega|=|G|/|\Omega|.
\]
\end{proof}

Proposition~\ref{prop:1} offers our third reduction: in proving Theorem~\ref{main} we may assume that $G=S$ is a non-abelian simple group. (In fact, in view of our first reduction $G$ is almost simple with socle the non-abelian simple group $S$. Then, by Table~\ref{table0}, we see that either $S$ itself is $2$-transitive, or $S=\PSL_2(8)$ and $G=\mathrm{P}\Sigma\mathrm{L}_2(8)$. However we have considered the latter possibility in Proposition~\ref{reduction2}.)

%

\section{Suzuki groups: line~$6$ of Table~\ref{table0}}\label{sec:suzuki}

In this section we follow the notation and the information on the
Suzuki groups in~\cite{suzuki}. Let $\ell$ be a positive integer,
$q=2^{2\ell+1}$ and $r=2^{\ell+1}$. Observe that $2q=r^2$. We denote
by $\Sz(q)$ the Suzuki group defined over the finite field with $q$
elements. The group $\Sz(q)$ has order $(q^2+1)q^2(q-1)$ and has a
$2$-transitive action on the points of an inversive plane of
cardinality $q^2+1$. Let $\pi$ be the permutation character of
$\Sz(q)$ and write $\pi=\chi_0+X$ where $\chi_0$ is the principal
character of $\Sz(q)$ and $X$ is an irreducible character of degree
$q^2$. The conjugacy classes and the character table of $\Sz(q)$ are
described in~\cite[Section~$17$]{suzuki}.

Suzuki in~\cite{suzuki} subdivides the irreducible complex characters into six families:
\begin{description}
\item[(1)] the principal character $\chi_0$, 
\item[(2)] the character $X$, 
\item[(3)]the family $(X_i)_{i=1}^{q/2-1}$ of $q/2-1$ characters of degree $q^2+1$, 
\item[(4)]the family $(Y_{j})_{j=1}^{(q+4)/4}$ of $(q+r)/4$ characters of degree $(q-r+1)(q-1)$, 
\item[(5)]the family $(Z_{k})_{k=1}^{(q-r)/4}$ of $(q-r)/4$ characters of degree $(q+r+1)(q-1)$, and 
\item[(6)]the family $(W_1,W_2)$ of two characters of degree $r(q-1)/2$.
\end{description}

The only conjugacy classes of $\Sz(q)$ relevant for our work are the conjugacy classes consisting of derangements. In the work of Suzuki these classes are partitioned into two families: 
\begin{description}
\item[(1)]the family $\pi_1$ consisting of $(q+r)/4$ conjugacy classes each of cardinality $(q-r+1)q^2(q-1)$, and 
\item[(2)]the family $\pi_2$ consisting of $(q-r)/4$ conjugacy classes each of cardinality $(q+r+1)q^2(q-1)$.
\end{description}

The character values are then described in~\cite[Section~$17$ and Theorem~$13$]{suzuki}. Using Lemma~\ref{eigenvalues}, this information and straightforward computations we obtain Table~\ref{table2}, which describes the eigenvalues (together with their multiplicities) of the derangement graph of $\Sz(q)$.

\begin{table}[!h]
\begin{tabular}{cccc}\hline
Eigenvalue&Multiplicity&Comments\\\hline
$\frac{q^3(q-1)^2}{2}$&$1$&Afforded by $\chi_0$\\
$-\frac{q(q-1)^2}{2}$&$q^4$&Afforded by $X$\\ 
$0$&$(q^2+1)^2\frac{q-2}{2}$&Afforded by $(X_i)_{i=1}^{q/2-1}$\\
$q^2$&$\frac{q(q-1)^3(q+1)}{2}$&Afforded by $(Y_i)_{i=1}^{(q+r)/4}$, $(Z_k)_{k=1}^{(q-r)/4}$ and $(W_l)_{l=1}^2$\\\hline
\end{tabular}
\caption{Eigenvalues and multiplicities for the derangement graph of $\Sz(q)$}\label{table2}
\end{table}

We are now ready to prove Theorem~\ref{main} for the $2$-transitive
groups with a Suzuki group as the socle.
\begin{proposition}\label{prop:suzuki}
Let  $G$ be a $2$-transitive group as in line~$6$ of Table~$\ref{table0}$. Then Theorem~$\ref{main}$  holds for $G$.
\end{proposition}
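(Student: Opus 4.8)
The plan is to read off all the eigenvalues of $\Gamma_G$ from Table~\ref{table2} and then apply Lemma~\ref{lemma:8}. By the third reduction recorded after Proposition~\ref{prop:1}, it suffices to treat the simple group $G=\Sz(q)$ itself, acting $2$-transitively on $\Omega$ with $|\Omega|=q^2+1$ and with $\psi=X$ the nonprincipal constituent of the permutation character, of degree $\psi(1)=q^2$.

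First I would locate the valency and the minimum eigenvalue. The valency is the eigenvalue afforded by $\chi_0$, namely $d=|\mathcal{D}|=\frac{q^3(q-1)^2}{2}$. Among the remaining rows of Table~\ref{table2}, the eigenvalues are $-\frac{q(q-1)^2}{2}$ (from $X$), $0$ (from the $X_i$), and $q^2>0$ (from the $Y_i$, $Z_k$, $W_l$); hence the minimum eigenvalue is $\tau=-\frac{q(q-1)^2}{2}$. Next I would check this equals $\lambda(\psi)$: using $|\Omega|-1=q^2$ one gets $-|\mathcal{D}|/(|\Omega|-1)=-\frac{q^3(q-1)^2}{2}\big/q^2=-\frac{q(q-1)^2}{2}=\tau$, as required. (If desired, $|\mathcal{D}|$ is recomputed by summing the class sizes over the families $\pi_1$ and $\pi_2$, using $r^2=2q$ to simplify $(q+r)(q-r+1)+(q-r)(q+r+1)=2q(q-1)$.) Finally, the multiplicity $q^4=(q^2)^2=\psi(1)^2$ listed for $\tau$ shows, via Lemma~\ref{eigenvalues}, that $\psi=X$ is the \emph{unique} irreducible character realising the minimum.

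Both hypotheses of Lemma~\ref{lemma:8} are thus met, so both Theorem~\ref{main} and Conjecture~\ref{newconjecture} follow for $G$. Concretely, Lemma~\ref{lemma:7} gives $|S|/|G|\leq(1-d/\tau)^{-1}=1/|\Omega|$ for every independent set $S$ of $\Gamma_G$, while any point stabiliser is an intersecting set of size exactly $|G|/|\Omega|$; hence the maximum is attained. The main obstacle is not this closing step, which is a one-line appeal to Lemma~\ref{lemma:8}, but the upstream construction of Table~\ref{table2}: one must correctly evaluate the character sums $\chi(\mathcal{D})$ from Suzuki's character table in~\cite{suzuki} and verify that none of the families $(X_i)$, $(Y_i)$, $(Z_k)$, $(W_l)$ produces an eigenvalue below $\lambda(\psi)$. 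Once Table~\ref{table2} is trusted, there is nothing left to do.
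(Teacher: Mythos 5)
Your proposal is correct and follows essentially the same route as the paper: after the reduction to $G=\Sz(q)$, the paper also reads off from Table~\ref{table2} that $\Gamma_G$ has a unique negative eigenvalue, afforded by $X=\psi$, and concludes via Lemma~\ref{lemma:8}. Your additional verifications (that $\tau=-|\mathcal{D}|/(|\Omega|-1)$ and that the multiplicity $q^4=\psi(1)^2$ forces uniqueness) are exactly the checks implicit in the paper's one-line appeal to that lemma.
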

\begin{proof}

  We use the notation that we established above and the reductions in
  Section~\ref{reductios}.  Let $S$ be the socle of $G$. Thus
  $G=S=\Sz(q)$. Then, from Table~\ref{table2}, the graph $\Gamma_G$ has
  a unique negative eigenvalue and hence the proof follows immediately
  from Lemma~\ref{lemma:8}.

\end{proof}


\section{Ree groups: line~$7$ of Table~\ref{table0}}\label{sec:reegroups}

The analysis in this section is similar to the analysis in
Section~\ref{sec:suzuki}. (All the information needed in this section
can be found in~\cite{ward}.)

Let $\ell$ be a positive integer, $q=3^{2\ell+1}$ and $m=3^\ell$. We
denote by $\Ree(q)$ the Ree group defined over the finite field with
$q$ elements. The group $\Ree(q)$ has order $(q^3+1)q^3(q-1)$ and has
a $2$-transitive action of degree $q^3+1$ on the points of a Steiner
system. Let $\pi$ be the permutation character of $\Ree(q)$ and write
$\pi=\xi_1+\xi_3$ where $\xi_1$ is the principal character of
$\Ree(q)$ and $\xi_3$ is an irreducible character of degree $q^3$. The
conjugacy classes and the character table of $\Ree(q)$ are described
in~\cite{ward}.

Since we are only interested in derangements, we only say a few words
on the conjugacy classes of $\Ree(q)$ consisting of
derangements. Ward~\cite{ward} subdivides the conjugacy classes of
derangements into four families:
\begin{description}
\item[(1)] a family consisting of $(q-3)/24$ conjugacy classes each
  having size $(q^2-q+1)q^3(q-1)$ (denoted by $(S^a)_a$);
\item[(2)] a family consisting of $(q-3)/8$ conjugacy classes each
  having size $(q^2-q+1)q^3(q-1)$ (denoted by $(JS^a)_a$);
\item[(3)] a family consisting of $(q-3m)/6$ conjugacy classes each
  having size $(q+1+3m)q^3(q^2-1)$ (denoted by $V$);
\item[(4)] a family consisting of $(q+3m)/6$ conjugacy classes each
  having size $(q+1-3m)q^3(q^2-1)$ (denoted by $W$).
\end{description}

Ward~\cite{ward} subdivides the irreducible complex characters into
several families (namely, $\{\xi_i\}_i$, $\{\eta_r\}_r$,
$\{\eta_r'\}_r$, $\{\eta_t\}_t$, $\{\eta_t'\}_t$, $\{\eta_i^-\}_i$ and
$\{\eta_i^+\}_i$). The value of some of these characters on
derangements is quite tricky to extract from~\cite{ward}: especially
the irreducible characters defined ``exceptional'' by Ward (like
$\{\eta_t\}_t$ and $\{\eta_t'\}_t$). Because of this technical
difficulties, we do not compute all the eigenvalues of the derangement
graph of $\Ree(q)$, but we content ourselves to the eigenvalues
$\lambda(\xi_i)$, where $i\in \{1,\ldots,10\}$. In fact, for these
characters the value $\lambda(\xi_i)$ can be easily computed by simply
reading the character table on page~87 and~88 in~\cite{ward} and by
using the information on the conjugacy classes of derangements of
$\Ree(q)$ that we outlined above (and by invoking Lemma~\ref{eigenvalues}). We sum up this information in
Table~\ref{table3}.

\begin{table}[!h]
\begin{tabular}{ccc}\hline
Eigenvalue&Comment\\\hline
$\frac{q^3(q-1)(q^3-2q^2-1)}{2}$&Afforded by $\xi_1$, valency of $\Gamma_{\Ree(q)}$\\
$-\frac{(q-1)(q^3-2q^2-1)}{2}$&Afforded by $\xi_3$\\ 
$0$&Afforded by $\xi_2$ and $\xi_4$\\
$3mq^2(-q+4m-1)$&Afforded by $\xi_5$ and $\xi_7$\\
$3mq^2(q+4m+1)$&Afforded by $\xi_6$ and $\xi_8$\\
$q^3$&Afforded by $\xi_9$ and $\xi_{10}$\\\hline
\end{tabular}
\caption{Some eigenvalues for the derangement graph of $\Ree(q)$}\label{table3}
\end{table}

\begin{proposition}\label{prop:ree}
Let  $G$ be a $2$-transitive group as in line~$7$ of Table~$\ref{table0}$. Then Theorem~$\ref{main}$  holds for $G$.
\end{proposition}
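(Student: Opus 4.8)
The plan is to realise the strategy described just before Lemma~\ref{lemma:8}: I will show that the minimum eigenvalue of $\Gamma_G$ is $\lambda(\psi)=-|\mathcal{D}|/(|\Omega|-1)$, afforded by the permutation-character constituent $\psi=\xi_3$ of degree $q^3$, and then conclude with Lemma~\ref{lemma:7}. By the reductions of Section~\ref{reductios} I may assume $G=S=\Ree(q)$, so that $|\Omega|-1=q^3$ and, reading Table~\ref{table3}, $\lambda(\psi)=-\tfrac{(q-1)(q^3-2q^2-1)}{2}$. The obstacle is precisely the one flagged in the text: Table~\ref{table3} records only $\lambda(\xi_1),\dots,\lambda(\xi_{10})$, and the values of the exceptional families $\eta_t,\eta_t'$ (and of the other $\eta$'s) on derangements are painful to extract from~\cite{ward}. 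The whole point of the argument is that Lemma~\ref{critical} converts the computation of these eigenvalues into a comparison of character \emph{degrees}, which are readily available.

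First I would evaluate the bound of Lemma~\ref{critical}. Taking the valency $|\mathcal{D}|=\tfrac{q^3(q-1)(q^3-2q^2-1)}{2}$ from Table~\ref{table3} together with $|G|=(q^3+1)q^3(q-1)$, a one-line calculation gives
\[
\frac{|G|}{|\mathcal{D}|}-2=\frac{2(q^3+1)}{q^3-2q^2-1}-2=\frac{4(q^2+1)}{q^3-2q^2-1},
\]
so any irreducible $\chi^*\neq\psi$ affording the minimum eigenvalue must satisfy $\chi^*(1)\le B$, where $B=q^3\sqrt{4(q^2+1)/(q^3-2q^2-1)}\sim 2q^{5/2}$.

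Next I would split $\Irr(G)$ into Ward's families. The ten characters $\xi_1,\dots,\xi_{10}$ have their eigenvalues listed in Table~\ref{table3}; the only negative ones are $\lambda(\xi_3)=\lambda(\psi)$ and $\lambda(\xi_5)=\lambda(\xi_7)=3mq^2(-q+4m-1)$. Writing $q=3m^2$ (legitimate since $q=3^{2\ell+1}$, $m=3^\ell$), the inequality $\lambda(\xi_3)\le 3mq^2(-q+4m-1)$ becomes a polynomial inequality in $m$ that holds for all $m\ge 3$, so the minimum \emph{over the $\xi_i$} is $\lambda(\psi)$. For the remaining families $\eta_r,\eta_r',\eta_t,\eta_t',\eta_i^-,\eta_i^+$ --- the characters in general position, whose degrees, recorded by Ward, are the indices $|G|_{p'}/|T|$ attached to the four maximal tori $T$ of orders $q-1,\,q+1,\,q\pm 3m+1$ --- the smallest degree is that belonging to the Coxeter torus of order $q+3m+1$, namely $(q^2-1)(q-3m+1)$. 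It therefore suffices to verify the single inequality $(q^2-1)(q-3m+1)>B$; since the left side equals $q^3-\sqrt{3}\,q^{5/2}+O(q^2)$ while $B\sim 2q^{5/2}$, this reduces to $\sqrt{q}>2+\sqrt{3}$, true for every admissible $q$ (the smallest being $q=27$). By Lemma~\ref{critical} no $\eta$-character can then afford the minimum eigenvalue.

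Putting the two cases together, the minimum eigenvalue of $\Gamma_G$ is $\lambda(\psi)=-|\mathcal{D}|/(|\Omega|-1)$, and Lemma~\ref{lemma:7} yields $|S|\le|G|/|\Omega|$ for every independent set $S$, which is Theorem~\ref{main} for $G$. The step I expect to be the real work is the final degree comparison: one must read off from~\cite{ward} the degrees (only the degrees, not the derangement values) of every $\eta$-family and confirm each exceeds $B$. The delicacy is that the smallest $\eta$-degree and $B$ have the respective shapes $q^3-O(q^{5/2})$ and $O(q^{5/2})$, so the gap --- although comfortable already at $q=27$ --- must be estimated rather than merely asserted; once Ward's degree list is in hand, however, this is routine.
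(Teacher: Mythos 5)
Your proposal is correct and follows essentially the same route as the paper: reduce to $G=\Ree(q)$, apply Lemma~\ref{critical} to bound the degree of any putative minimiser by $q^3\sqrt{4(q^2+1)/(q^3-2q^2-1)}$, use Ward's degree list to rule out all characters outside $\{\xi_2,\xi_5,\ldots,\xi_{10}\}$, and then read off Table~\ref{table3} to see that none of the survivors beats $\lambda(\xi_3)=\lambda(\psi)$, so the ratio bound gives $|S|\le |G|/|\Omega|$. Your version merely makes explicit the asymptotic degree comparison (smallest $\eta$-degree $(q^2-1)(q-3m+1)\sim q^3-\sqrt{3}q^{5/2}$ versus $B\sim 2q^{5/2}$) that the paper leaves as an unstated check against \cite[page~87]{ward}.
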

\begin{proof}
We use the notation and the reductions that we established above.
Let $S$ be the socle of $G$. Thus $G=S=\Ree(q)$. We show that $\lambda(\xi_3)$ is the minimal eigenvalue of $\Gamma_{G}$ and that $\xi_3$ is the unique irreducible character affording this eigenvalue, the proof will then follow immediately from Lemma~\ref{lemma:8}.

We argue by contradiction and we assume that there exists $\chi^*\in \Irr(G)$ with $\lambda(\chi^*)\leq\lambda(\xi_3)$ and $\chi^*\neq \xi_3$. From Lemma~\ref{critical} and from Table~\ref{table3}, we have
\begin{equation}\label{eq:ree}\chi^*(1)\leq (|\Omega|-1)\sqrt{\frac{|G|}{|\mathcal{D}|}-2}=q^3\sqrt{\frac{4q^2+4}{q^3-2q^2-1}}.
\end{equation}
By comparing the character degrees of $\Ree(q)$ (given in~\cite[page~$87$]{ward}) with Eq.~\eqref{eq:ree}, we get 
$\chi^*\in \{\xi_2,\xi_5,\xi_6,\xi_7,\xi_8,\xi_9,\xi_{10}\}$. Now, Table~\ref{table3} yields $\lambda(\chi^*)>\lambda(\xi_3)$, a contradiction.
\end{proof}

\section{Higman-Sims Group: line 13 of Table~\ref{table0}}
\label{sec:HigmanSims}

Let $G$ be the Higman-Sims group: this is a sporadic group with
$44352000$ elements and a $2$-transitive action on a geometry, known
as ``Higman's Geometry'', that has 176 points. Let $\pi = \chi_0 +
\psi$ denote the permutation character for this action and let
$\chi_0$ be the principal character of $G$.  As stated in
Section~\ref{algebraicgraph}, there is an irreducible representation
of $G$ for which the corresponding eigenvalue of the derangement graph
is strictly less than $\lambda(\psi)$. In this case we prove that
Theorem~\ref{main} still holds for $G$ using a weighted adjacency
matrix.

The Higman-Sims group has $5$ conjugacy classes of derangements (the
class \texttt{4B, 8A, 11A, 11B} and \texttt{15A} using the notation
in~\cite{ATLAS}). Weight the two classes \texttt{11A} and \texttt{11B}
with a $1$ and all other classes with a $0$. Now, the eigenvalues of
the weighted adjacency matrix can be directly calculated. The largest
eigenvalue is $8064000$, and the least is $-46080$. By
Lemma~\ref{weightedratio}, if $S$ is a coclique in $\Gamma_{G}$, then
\[
|S| \leq \frac{|G|}{1 - \frac{-46080}{8064000}} = \frac{|G|}{176}.
\]

\section{Projective Special Unitary Groups: Line 5 of Table~\ref{table0}}
\label{sec:PSU}

Let $G$ be the group $\mathrm{PSU}_3(q)$. We follow the notation and use the
character table for $G$ that is given in \cite{MR0335618}.
This group has order $q^3(q^3+1)(q^2-1)/d$ where $d = \gcd(3, q+1)$
and a $2$-transitive action with order $q^3+1$. Let $\pi = \chi_0 +
\psi$ denote the permutation character of this action and $\chi_0$ the principal
character.  The eigenvalue $\lambda(\psi)$ is not the least eigenvalue
for the adjacency matrix of the derangement graph, so we will consider
a weighted adjacency matrix.

For $\gcd(3,q+1) =1$, there are two families of conjugacy classes in
$\mathrm{PSU}_3(q)$ that are derangements, $C_1$ and $C_2$:
\begin{description}
\item[(1)]$C_1$ has $(q^2-q)/3$ classes each of size ${|G|}/{(q^2-q+1)}$, and 
\item[(2)]$C_2$
has $(q^2-q)/6$ classes each of size ${|G|}/{(q+1)^2}$. 
\end{description}For the
values of $q$ with $\gcd(q+1,3) =3$, there are three families of
conjugacy classes of derangements in $\mathrm{PSU}_3(q)$ which we denote by
$C_1$, $C_2'$ and $C_2''$:
\begin{description}
\item[(1)]$C_1$ contains
${(q^2-q-2)}/{9}$ classes each of size ${3|G|}/{(q^2-q+1)}$,  
\item[(2)] $C_2'$ contains ${(q^2-q-2)}/{18}$ classes of size
${3|G|}/{(q+1)^2}$ and
\item[(3)] $C_2''$ contains a single class of size
${|G|}/{(q+1)^2}$. 
\end{description}
Set $C_2 = C_2' \cup C_2''$. 

Define the $(a,b)$-weighted adjacency matrix $A$ as follows. The
$(\sigma,\pi)$-entry of $A$ is equal to $a$ if $\sigma^{-1}\pi$ is in
one of the conjugacy classes in the family $C_1$, and the entry is
equal to $b$ if $\sigma^{-1}\pi$ is in one of the conjugacy classes in
the family $C_2$, and any other entry is $0$. Set
\[
a= \frac{q(2q^2+q-1)}{3|C_1|},
\quad 
b= \frac{q(q^2-q+1)}{3|C_2|},
\]
where (with an abuse of notation) $|C_i|$ denotes the total number of
elements in the conjugacy classes in the family $C_i$ rather than the
size of the family. Thus
\begin{eqnarray*}
|C_1|&=&
\begin{cases}
\frac{q^4(q^2-1)^2}{3}&\textrm{if }\gcd(q+1,3)=1,\\
\frac{q^3(q-1)(q+1)^3(q-2)}{9}&\textrm{if }\gcd(q+1,3)=3,\\
\end{cases}\\
|C_2|&=&
\begin{cases}
\frac{q^4(q-1)^2(q^2-q+1)}{6}&\textrm{if }\gcd(q+1,3)=1,\\
\frac{q^3(q-1)(q^2-q+1)(q^2-q+4)}{18}&\textrm{if }\gcd(q+1,3)=3.\\
\end{cases}
\end{eqnarray*}

Straight-forward calculations (using the table in~\cite{MR0335618})
produces the eigenvalues of $A$ for three of the irreducible characters of
$\mathrm{PSU}_3(q)$ (for all admissible values of $q$).

\begin{lemma}\label{psuevalues-1}
  The eigenvalue of $A$ afforded by the principal character is $q^3$. The
  eigenvalue afforded by  $\psi$ is $-1$. The eigenvalue afforded by the
  irreducible representation with degree $q(q-1)$ is $-1$. 
\end{lemma}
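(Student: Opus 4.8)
The statement is Lemma~\ref{psuevalues-1}, which asks for the eigenvalues of the weighted adjacency matrix $A$ afforded by three specific irreducible characters of $\psu_3(q)$: the principal character $\chi_0$, the character $\psi$ of degree $q^2-q$ (so that $\pi=\chi_0+\psi$), and the irreducible character of degree $q(q-1)$. The governing formula is Eq.~\eqref{eq:weightedevalues}, which for our $(a,b)$-weighting reads
\[
\lambda(\chi,{\bf a})=\frac{1}{\chi(1)}\Bigl(a\sum_{x\in C_1}\chi(x)+b\sum_{x\in C_2}\chi(x)\Bigr),
\]
where the class-sums run over the derangement families $C_1$ and $C_2$. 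So the whole lemma reduces to evaluating these two class-sums for each of the three characters and plugging in the prescribed values of $a$ and $b$.

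Let me spell out how I would organize this.

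**Step 1: the principal and permutation characters.** For $\chi_0$ every value is $1$, so $\sum_{x\in C_i}\chi_0(x)=|C_i|$ and $\lambda(\chi_0,{\bf a})=a|C_1|+b|C_2|$. Substituting $a=q(2q^2+q-1)/(3|C_1|)$ and $b=q(q^2-q+1)/(3|C_2|)$, the sizes cancel and I get $\lambda(\chi_0,{\bf a})=\tfrac{q}{3}\bigl((2q^2+q-1)+(q^2-q+1)\bigr)=\tfrac{q}{3}\cdot 3q^2=q^3$, as claimed. For $\psi$ I use that $\psi=\pi-\chi_0$ where $\pi$ is the permutation character; since every element of $C_1\cup C_2$ is a derangement, $\pi(x)=0$ there, so $\psi(x)=-1$ on every derangement. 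Hence $\sum_{x\in C_i}\psi(x)=-|C_i|$, and the computation is identical to the $\chi_0$ case up to a sign and division by $\psi(1)=q^2-q$:
\[
\lambda(\psi,{\bf a})=\frac{-(a|C_1|+b|C_2|)}{q^2-q}=\frac{-q^3}{q^2-q}=\frac{-q^3}{q(q-1)}=\frac{-q^2}{q-1}.
\]
I would double-check the claimed value $-1$ against this; if the lemma asserts $-1$ then the intended normalization must make $a|C_1|+b|C_2|=\psi(1)=q^2-q$ rather than $q^3$, and I would reconcile this with the general identity $\lambda(\psi,{\bf a})=-\tfrac{1}{|\Omega|-1}\sum_i a_i|C_i|$ from the proof of Lemma~\ref{weightedcritical}. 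This bookkeeping (tracking the normalization of $a$ and $b$ so that the answer comes out exactly $-1$) is precisely the point where an arithmetic slip is easiest, and so it is the step I would verify most carefully.

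**Step 2: the degree-$q(q-1)$ character, the real work.** This is the genuine content and the main obstacle. Here I must read off from the character table in~\cite{MR0335618} the values of the degree-$q(q-1)$ character on a representative of each derangement class in $C_1$ and in $C_2$, then form the weighted class-sums $\sum_{x\in C_1}\chi(x)$ and $\sum_{x\in C_2}\chi(x)$. The subtlety is twofold: first, $C_1$ and $C_2$ are each \emph{families} of several conjugacy classes (parametrized in the table by roots of unity or by characters of a torus), so each class-sum is itself a sum over a family — typically a character-orthogonality or geometric-series sum over a cyclic group of roots of unity, which collapses to something simple; second, the two cases $\gcd(q+1,3)=1$ and $\gcd(q+1,3)=3$ have different class structures (in the latter, $C_2$ splits as $C_2'\cup C_2''$ with different class sizes), so I would need to check that both cases yield the same final eigenvalue $-1$. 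I would carry out the summation family-by-family, use the orthogonality of the relevant roots of unity to evaluate $\sum_{x\in C_i}\chi(x)$, then insert $a$ and $b$ and verify that the dependence on $q$ cancels to leave exactly $-1$. Once these two class-sums are in hand, the final substitution and simplification is mechanical, and the lemma follows.
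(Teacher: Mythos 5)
Your computation of $\lambda(\chi_0,{\bf a})=a|C_1|+b|C_2|=q^3$ is correct, but Step~1 then goes wrong for a concrete reason: you assign $\psi$ the degree $q^2-q$, whereas $\psi$ is the nontrivial constituent of the permutation character of the $2$-transitive action on $q^3+1$ points, so $\psi(1)=|\Omega|-1=q^3$. With the correct degree the identity $\lambda(\psi,{\bf a})=-\tfrac{1}{|\Omega|-1}\sum_i a_i|C_i|$ gives $-q^3/q^3=-1$ immediately, with no normalization issue to ``reconcile''. Your proposed fix --- rescaling $a$ and $b$ so that $a|C_1|+b|C_2|=q^2-q$ --- would be wrong: it would destroy the first assertion of the lemma, since $\lambda(\chi_0,{\bf a})$ would no longer equal $q^3$. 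You appear to have conflated $\psi$ with the degree-$q(q-1)$ character that appears in the third assertion; these are different characters ($\psi$ is the Steinberg-type character of degree $q^3$).

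For Step~2 you correctly identify what must be done (read the values of the degree-$q(q-1)$ character on the derangement families from the Simpson--Frame table, sum over each family, and treat the cases $\gcd(3,q+1)=1$ and $3$ separately), but you do not carry out any of it, so the third eigenvalue is not actually established. The paper itself dismisses all three computations as ``straight-forward calculations'' from the character table, so omitting the details of Step~2 is arguably consistent with the paper's level of detail; but as a standalone proof your write-up proves only the first assertion, gets the second wrong, and defers the third entirely.
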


We will calculate the exact value of the eigenvalues of $A$
corresponding the representations of degree $q^2-q+1$. We will
consider the cases where $\gcd(3,q+1)=1$ and $\gcd(3, q+1) = 3$
separately.

\begin{lemma}\label{psuevalues1}
  Assume that $\gcd(3,q+1) = 1$.
  If $q$ is odd, then the eigenvalue for exactly one of the
  irreducible representations with degree $q^2-q+1$ is equal to $-1$,
  for any other irreducible representations with degree $q^2-q+1$, the
  eigenvalue is equal to $2/(q-1)$. If $q$ is even, then the
  eigenvalue for every irreducible representation of degree $q^2-q+1$
  is equal to $2/(q-1)$.
\end{lemma}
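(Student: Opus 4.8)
The plan is to compute the eigenvalue $\lambda(\chi, \mathbf{a})$ directly from Eq.~\eqref{eq:weightedevalues} for each irreducible character $\chi$ of degree $q^2-q+1$, using the explicit character values from the table in~\cite{MR0335618}. Since $\gcd(3,q+1)=1$, the relevant derangement classes are $C_1$ (of size $|C_1|/((q^2-q)/3) = |G|/(q^2-q+1)$ each) and $C_2$ (of size $|G|/(q+1)^2$ each). The key formula to evaluate is
\[
\lambda(\chi, \mathbf{a}) = \frac{1}{q^2-q+1}\left( a \sum_{x \in C_1} \chi(x) + b \sum_{x \in C_2} \chi(x) \right),
\]
so the whole computation reduces to summing the values of $\chi$ over the two families of derangement classes and then substituting the chosen weights $a$ and $b$.

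The first step is to identify, from~\cite{MR0335618}, exactly which irreducible characters have degree $q^2-q+1$ and to read off their values on representatives of the classes in $C_1$ and $C_2$. These characters are naturally parametrised (typically indexed by a character of a cyclic subgroup of order $q^2-q+1$, one of the non-split torus), so I would sum the character values over each conjugacy class in the family by multiplying the value on a representative by the class size, then sum over the family. The appearance of roots of unity indexed by these parameters means the sums $\sum_{x \in C_1}\chi(x)$ and $\sum_{x \in C_2}\chi(x)$ will involve Gauss-type or geometric sums over the indexing character; the generic value of such a sum is one constant, but for the distinguished parameter (where the indexing character is trivial or satisfies a special congruence) the sum degenerates to a different value. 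This is precisely the mechanism that will separate ``exactly one'' representation with eigenvalue $-1$ from ``all others'' with eigenvalue $2/(q-1)$, and the parity of $q$ enters through whether that distinguished parameter actually occurs among the admissible indices of degree-$(q^2-q+1)$ characters.

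Once both class sums are in hand, I would substitute the explicit weights
\[
a = \frac{q(2q^2+q-1)}{3|C_1|}, \qquad b = \frac{q(q^2-q+1)}{3|C_2|},
\]
where the $|C_i|$ denote the total number of elements (as given in the displayed formulas for $\gcd(q+1,3)=1$), and simplify. The factors $|C_1|$ and $|C_2|$ in the denominators of $a$ and $b$ are designed to cancel the class-size factors coming from $\sum_{x \in C_i}\chi(x)$, leaving a rational expression in $q$ that collapses to either $-1$ or $2/(q-1)$ after the generic-versus-degenerate split described above.

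The main obstacle will be the careful bookkeeping of the two cases of the class sum, namely correctly recognising which single character of degree $q^2-q+1$ yields the degenerate sum producing the eigenvalue $-1$ when $q$ is odd, and confirming that no such character exists when $q$ is even. This requires matching the parametrisation in~\cite{MR0335618} precisely and tracking how the indexing character interacts with the congruence defining the class representatives, and it is the only genuinely delicate point; the substitution of the weights and the final algebraic simplification are routine by comparison.
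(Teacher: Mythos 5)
Your outline follows the same route as the paper: read the degree-$(q^2-q+1)$ characters $\chi_u$ off the Simpson--Frame table, sum their values over the derangement classes, observe that the weights $a,b$ are rigged to cancel the class sizes, and locate a single ``degenerate'' parameter responsible for the eigenvalue $-1$ (you even correctly anticipate that the parity of $q$ decides whether that parameter is admissible). But the proposal stops exactly where the lemma's content begins. The whole computation reduces to evaluating the exponential sum
\[
\sum_{(k,l,m)\in T} e^{3uk}+e^{3ul}+e^{3um},
\qquad
T=\{(k,l,m) : k+l+m\equiv 0 \ (\mathrm{mod}\ q+1),\ 1\leq k<l<m\leq q+1\},
\]
where $e$ is a primitive $(q+1)$th root of unity, because the classes of $C_2$ are parametrised by such triples and (a fact you never record) $\chi_u$ vanishes identically on the classes of $C_1$, so the $C_1$ term contributes nothing. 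Evaluating that sum is not a generic ``Gauss-type sum degenerates at the trivial character'' phenomenon: it requires the explicit combinatorial counts of how many triples of $T$ contain each residue class (in the paper: for $q$ odd, the element $q+1$ and every odd element occur in $(q-1)/2$ triples while every even element occurs in $(q-3)/2$; for $q$ even, $q+1$ occurs in $q/2$ triples and every other element in $(q-2)/2$). Only from these counts does one get the value $-(q+1)/2$ at the exceptional parameter $u=(q+1)/2$ (hence eigenvalue $-1$) and the value $1$ for all other $u$ (hence eigenvalue $2/(q-1)$). You acknowledge this bookkeeping as ``the only genuinely delicate point'' but do not carry any of it out, so the specific values $-1$ and $2/(q-1)$, and the identification of the unique exceptional character, are never actually established.

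A secondary inaccuracy: you guess that the degree-$(q^2-q+1)$ characters are indexed by characters of a cyclic group of order $q^2-q+1$ (a non-split torus). In fact they are parametrised by $u\in\{1,\dots,q\}$, i.e.\ essentially by nontrivial characters of a cyclic group of order $q+1$; the torus of order $q^2-q+1$ parametrises the classes of $C_1$, on which these characters vanish. Since matching the parametrisation to the congruence defining $T$ is precisely the step you defer, starting from the wrong torus would derail the computation you propose.
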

\begin{proof}
Define
\[
T=\{ (k,l,m) \, : \, k+l+m \equiv 0 \pmod{q+1}, \, 1\leq k <l<m\leq q+1\}.
\]
For $\gcd(3,q+1)=1$, the conjugacy classes in $C_2$ are parametrised
by triples from the set $T$. The irreducible representations with
degree $q^2-q+1$ are parametrised by $u \in \{1,\dots, q\}$ and we
will denote them by $\chi_u$. The value of $\chi_u$ on the conjugacy
classes in $C_1$ is $0$. The sum of the character $\chi_u$ over all
the conjugacy classes in the family $C_2$ is
\begin{align}\label{eq:chi3}
\sum_{(k,l,m) \in T} e^{3uk}+e^{3ul}+e^{3um},
\end{align}
where $e$ is a complex primitive $(q+1)$th root of unity. 

Simple counting arguments (that we omit) will show the following two results.
\begin{claim}\label{claim1}
If $q$ is odd, then 
\begin{description}
\item[(1)] the element $q+1$ occurs in exactly $(q-1)/2$ triples in $T$,
\item[(2)] any odd element from $\{1,\dots , q\}$ occurs in exactly $(q-1)/2$ triples in $T$,
\item[(3)] any even element from $\{1,\dots , q\}$ occurs in exactly
  $(q-3)/2$ triples in $T$.
\end{description}
\end{claim}

\begin{claim}\label{claim2}
If $q$ is even, then 
\begin{description}
\item[(1)] the element $q+1$ occurs in exactly $q/2$ triples in $T$,
\item[(2)] any element from $\{1,\dots , q\}$ occurs in exactly $(q-2)/2$ triples in $T$.
\end{description}
\end{claim}

If $q$ is odd then there is an irreducible representation parametrised
by $u= (q+1)/2$. In this case, and using Claim~\ref{claim1}, the value
of the sum in Eq.~\eqref{eq:chi3} is $-(q+1)/2$ and by
Eq.~\eqref{eq:weightedevalues} the corresponding eigenvalue is
$-1$. If $u \neq \frac{q+1}{2}$, then, using Claim~\ref{claim1} or
Claim~\ref{claim2} as appropriate, the sum in Eq.~\eqref{eq:chi3}
is $1$. A straight-forward calculation using
Eq.~\eqref{eq:weightedevalues} then shows that the eigenvalue for
this representation is $2/(q - 1)$.  
\end{proof}

\begin{lemma}\label{psuevalues2}
  Assume that $\gcd(3,q+1) = 3$.
  The eigenvalue for irreducible representations with degree $q^2-q+1$
  is equal to $6q/(q^2-q+4)$.
\end{lemma}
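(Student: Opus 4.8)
The plan is to mimic the structure of the proof of Lemma~\ref{psuevalues1}, but now in the case $\gcd(3,q+1)=3$, where the derangement classes in the family $C_2$ split into $C_2'$ and $C_2''$ and carry slightly different sizes. The eigenvalue afforded by an irreducible character $\chi$ of degree $q^2-q+1$ is computed from Eq.~\eqref{eq:weightedevalues}, and since such $\chi$ vanishes on all classes in $C_1$ (as in the previous lemma), the only contribution comes from the $C_2$-part. Thus the first step is to isolate the relevant summand
\[
\lambda(\chi,(a,b)) = \frac{b}{q^2-q+1}\sum_{x\in C_2}\chi(x),
\]
and to plug in $b=q(q^2-q+1)/(3|C_2|)$ so that the unknown reduces to evaluating $\sum_{x\in C_2}\chi(x)$, i.e. the weighted character sum over the derangement classes of type $C_2$.

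First I would read off from the character table in~\cite{MR0335618} the explicit values of the degree-$(q^2-q+1)$ characters on the representatives of $C_2'$ and $C_2''$; as in Lemma~\ref{psuevalues1} these will be expressions of the form $e^{3uk}+e^{3ul}+e^{3um}$ for a primitive $(q+1)$th root of unity $e$ and a triple $(k,l,m)$ indexing the class, together with the appropriate class sizes. The key combinatorial step is the analogue of Claims~\ref{claim1} and~\ref{claim2}: I would count, over the indexing set $T$ (or its appropriate restriction when $3\mid q+1$), how many times each residue occurs, and combine this with the character values to evaluate $\sum_{x\in C_2}\chi(x)$ in closed form. Because here $3\mid q+1$, the parametrisation of the degree-$(q^2-q+1)$ characters differs from the coprime case — fewer such characters survive and the orbit structure under multiplication by $3$ modulo $q+1$ changes — so the counting must be redone with this divisibility in mind, taking proper care of the single exceptional class $C_2''$ of smaller size.

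After obtaining the character sum, the final step is purely arithmetic: substitute into Eq.~\eqref{eq:weightedevalues}, insert the value of $|C_2|=|C_2'|+|C_2''|$ from the displayed formula for $\gcd(q+1,3)=3$, and simplify to reach $6q/(q^2-q+4)$. The factor $q^2-q+4$ appearing in the denominator of the answer matches the factor in the stated size of $|C_2|$ in this case, which is reassuring and suggests that after cancelling $b$ and the class sizes the computation collapses cleanly.

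The main obstacle I expect is the combinatorial counting in the divisible case: extracting the precise multiplicities of each residue in $T$ when $3\mid q+1$, and correctly separating the contribution of the exceptional class $C_2''$ (whose size is $|G|/(q+1)^2$ rather than $3|G|/(q+1)^2$) from the generic classes in $C_2'$. Getting the weighting of these two subfamilies right in the sum $\sum_{x\in C_2}\chi(x)$ is the delicate point; once the correct count is in hand, the remaining algebra is routine and should yield a single eigenvalue $6q/(q^2-q+4)$ independent of the character, exactly because the residue-occurrence counts turn out to be uniform across all the relevant characters (unlike the odd-$q$ coprime case, where the character $u=(q+1)/2$ was exceptional).
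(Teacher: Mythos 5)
Your plan follows the paper's proof essentially step for step: vanishing on $C_1$, a root-of-unity/residue count over the triples parametrising $C_2'$, separate treatment of the exceptional class $C_2''$, and a final substitution into Eq.~\eqref{eq:weightedevalues} using $|C_2|$. The only thing left to execute is the crux the paper records explicitly --- the sum over the $C_2'$ classes vanishes for every admissible $u$ (checked separately for $q$ odd and $q$ even), so the entire eigenvalue comes from the single class $C_2''$, on which each $\chi_u$ takes the value $3$ --- but your outline correctly anticipates where that computation lives and would lead to the stated value.
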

\begin{proof}
Define 
\[
T'=\{ (k,l,m) \, : \, k+l+m \equiv 0 \pmod{q+1}, \, 1\leq k <l \leq (q+1)/3, \, \ell<m\leq q+1\}.
\]
The conjugacy classes in family $C_2'$, when $\gcd(3,q+1)=3$, are
parametrised by a triple from $T'$.

Let $\chi_u$ be an irreducible representation with degree $q^2-q+1$;
these characters are parameterized by $u \in \{1,\dots,
(q+1)/3-1\}$. The value of $\chi_u$ on the conjugacy classes of type
$C_1$ is $0$ and the value of $\chi_u$ on the conjugacy class of type
$C_2''$ is $3$.  The sum of $\chi_u$ over all conjugacy classes of type
$C_2'$ is
\begin{align}\label{eq:chi1divides}
\sum_{(k,\ell,m) \in T'} e^{3ku}+e^{3\ell u}+e^{3m u},
\end{align}
(here $e$ is a complex primitive $(q+1)$th root of unity). 

The exact value of the sum in Eq.~\eqref{eq:chi1divides} can be
determined, we will do this first for $q$ odd and then for $q$ even.
If $q$ is odd, then for every $i$ there are, in total, $\frac{q-2}{6}$
elements $j$ in triples from $T'$ with $j \equiv 3i \pmod{q+1}$. In
this case we have that the sum in Eq.~\eqref{eq:chi1divides} is equal
to
\begin{align*}
\frac{q-2}{3} \sum_{i=1}^{\frac{q+1}{3}}e^{3i} + \frac{q-2}{6} \sum_{i=1}^{\frac{q+1}{3}}e^{3i} = 0.
\end{align*}
If both $q$ and $i$ are even, then there are in total $\frac{q-5}{6}$
elements $j$ in triples from $T'$ for which $j \equiv 3i \pmod{q+1}$.
If $q$ is even and $i$ is odd, then there are in total $\frac{q+1}{6}$
elements $j$ in triples from $T'$ for which $j \equiv 3i
\pmod{q+1}$. Thus for $q$ even the sum in Eq.~\eqref{eq:chi1divides}
is
\begin{align*}
\frac{q-2}{3} \sum_{i=1}^{\frac{q+1}{3}}e^{3i} 
 + \frac{q-5}{6} \sum_{i=1}^{\frac{q+1}{6}}e^{6i}
 + \frac{q+1}{6} \sum_{i=1}^{\frac{q+1}{6}}e^{3(2i-1)}
= 0.
\end{align*}

A straight-forward application of Eq.~\eqref{eq:weightedevalues}
shows that the eigenvalue for these representations is $6q/(q^2-q+4)$.
\end{proof}

\begin{proposition}
  Let $G =\mathrm{PSU}_3(q)$, then Theorem~\ref{main} holds.
\end{proposition}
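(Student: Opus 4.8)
The plan is to apply the weighted ratio-bound (Lemma~\ref{weightedratio}) to the $(a,b)$-weighted adjacency matrix $A$ introduced above, exactly mirroring the strategy used for the Higman-Sims group in Section~\ref{sec:HigmanSims}. By the reductions of Section~\ref{reductios} (specifically Proposition~\ref{prop:1}), it suffices to treat $G=\mathrm{PSU}_3(q)$ itself. The first step is to verify that the largest eigenvalue of $A$ is $\lambda(\chi_0,\mathbf{a})=q^3$, afforded by the principal character. Since the weights $a,b$ are non-negative, this is immediate from the observation recorded after Eq.~\eqref{eq:weightedevalues}. Combining this with Lemma~\ref{weightedratio}, the conclusion $|S|\leq |G|/(q^3+1)=|G|/|\Omega|$ will follow once I show that the \emph{minimum} eigenvalue of $A$ equals $\lambda(\psi,\mathbf{a})=-1$; indeed, $(1-d/\tau)^{-1}=(1-q^3/(-1))^{-1}=(q^3+1)^{-1}$ gives precisely the desired bound.

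Thus the core of the proof is to establish that $\tau=-1$ is the least eigenvalue of $A$, i.e.\ that $\lambda(\chi,\mathbf{a})\geq -1$ for every $\chi\in\Irr(G)$. Lemmas~\ref{psuevalues-1},~\ref{psuevalues1} and~\ref{psuevalues2} already pin down the relevant eigenvalues: the principal character gives $q^3$; the characters $\psi$ and the character of degree $q(q-1)$ both give $-1$; and the characters of degree $q^2-q+1$ give values that are either $-1$ or strictly positive (namely $2/(q-1)$ when $\gcd(3,q+1)=1$, or $6q/(q^2-q+4)$ when $\gcd(3,q+1)=3$). So on these families the eigenvalues are all $\geq -1$, with equality occurring for at least two distinct characters. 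The remaining step is to bound the eigenvalues afforded by \emph{all other} irreducible characters of $\mathrm{PSU}_3(q)$ and show they too are $\geq -1$.

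For the remaining characters I would invoke Lemma~\ref{weightedcritical}. Suppose for contradiction that some $\chi^*\neq\psi$ afforded the minimum eigenvalue of $A$ with $\lambda(\chi^*,\mathbf{a})<-1=\lambda(\psi,\mathbf{a})$. Then Lemma~\ref{weightedcritical} forces the degree bound
\[
\chi^*(1)\leq (q^3+1-1)\sqrt{|G|\,\frac{a^2|C_1|+b^2|C_2|}{(a|C_1|+b|C_2|)^2}-2}=q^3\sqrt{|G|\,\frac{a^2|C_1|+b^2|C_2|}{(a|C_1|+b|C_2|)^2}-2}.
\]
Substituting the explicit values of $a,b,|C_1|,|C_2|$ (which differ in the two congruence cases) reduces the right-hand side to an explicit rational function of $q$; I expect it to be comparable to $q^3$ times a small constant, so that the only character degrees of $\mathrm{PSU}_3(q)$ falling below this bound are those already handled—namely $q^2-q+1$, $q(q-1)$, and $\psi(1)=q^3$—together possibly with a few small-degree characters. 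For each surviving candidate one reads its eigenvalue directly from the character table in~\cite{MR0335618} via Eq.~\eqref{eq:weightedevalues} and checks $\lambda(\chi^*,\mathbf{a})\geq -1$, producing the contradiction.

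The main obstacle, and the reason the excerpt calls the unitary groups "by far the hardest case," is the bookkeeping with the weighted sums $\sum_{x\in\mathcal{C}_i}\chi(x)$ over the derangement classes for the many character families of $\mathrm{PSU}_3(q)$, compounded by the split into the $\gcd(3,q+1)=1$ and $\gcd(3,q+1)=3$ cases. The delicate point is choosing and justifying the weights $a,b$ so that the critical bound from Lemma~\ref{weightedcritical} is tight enough to exclude every genuinely large-degree character; the specific values given above are evidently engineered precisely so that $\psi$ and the degree-$q(q-1)$ character both sit at the minimum $-1$ while the critical degree threshold lands just below the next relevant family. Verifying that no exotic character of intermediate degree sneaks in below the threshold—especially the generic characters of degree $(q-1)(q^2-q+1)/d$ and $(q+1)(q^2-q+1)$—is where the careful, case-by-case character-table computation is unavoidable.
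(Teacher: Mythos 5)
Your proposal is correct and follows essentially the same route as the paper: the weighted ratio bound with largest eigenvalue $q^3$ from the principal character, Lemma~\ref{weightedcritical} to force any hypothetical minimizer $\chi^*\neq\psi$ to have degree below $(q-1)(q^2-q+1)$, the Simpson--Frame degree list to reduce to the characters of degree $q(q-1)$ and $q^2-q+1$, and the explicit eigenvalues $-1$, $2/(q-1)$, $6q/(q^2-q+4)$ from Lemmas~\ref{psuevalues-1}--\ref{psuevalues2} to conclude $\tau=-1$. The only point you leave implicit is that the degree bound from Lemma~\ref{weightedcritical} is only verified for $q\geq 6$, so the cases $q\leq 5$ must be settled by direct computation of the eigenvalues of $A$, as the paper does.
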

\begin{proof} Let $A$ be the $(a,b)$-weighted adjacency matrix defined above.
The eigenvalues of $A$ can be directly calculated for $q \leq 5$, so
we will assume that $q\geq 6$.  From Lemma~\ref{psuevalues-1},
$\lambda(\psi) =-1$, we will show that this is the least eigenvalue of
$A$.

If $\chi$ is an irreducible representation with $\lambda(\chi) \leq \lambda(\psi)$, then by Lemma~\ref{weightedcritical}
\begin{align*}
\chi(1) &\leq  (|\Omega|-1) \sqrt{ |G| \frac{\sum_{i=0}^{\ell} a_i^2 |C_i|}{(\sum_{i = 1}^\ell a_i |C_i|)^2} -2 }\\
&= (q^3) \sqrt{|G| \frac{  (\frac{q(2q^2+q-1)}{3})^2 \frac{1}{|C_1|} + (\frac{q(q^2-q+1)}{3})^2\frac{1}{|C_2|}} 
                    {(\frac{q(2q^2+q-1) + q(q^2-q+1)}{3})^2}   - 2 }. 
\end{align*}
Some rather tedious, but not complicated, calculation show that this
is strictly smaller than $(q-1)(q^2-q+1)$ for $q\geq 6$ (the cases
where $\gcd(3,q+1)$ is equal to $1$ and $3$ need to be considered
separately).

The only irreducible representations of $\mathrm{PSU}_3(q)$ with degree less
than $(q-1)(q^2-q+1)$ are the representations with degree $q(q-1)$ and
$q^2-q+1$ (this is from \cite[Table 2]{MR0335618}). We have calculated
in Lemmas~\ref{psuevalues-1}, \ref{psuevalues1} and \ref{psuevalues2}
that the eigenvalue for any of these representations is one of $-1$,
$2/(q-1)$ or $6q/(q^2-q+4)$. This implies that the least eigenvalue of
$A$ is $-1$ and by Lemma~\ref{weightedratio} and intersecting set
in $G$ is no larger than $|G|/(q^3+1)$.  
\end{proof}


\section{Projective linear groups: line~$2$ of Table~\ref{table0}}\label{sec:PSL}

By the reductions given in Section~\ref{reductios}, we only need to
consider the groups $\PSL_n(q)$. Throughout this section, let $G = \PSL_n(q)$. This group has
$\frac{\Pi_{i=1}^{n-1}(q^n-q^i)}{d(q-1)}$ elements, where $d=
\gcd(n,q-1)$, and a natural action of order $\frac{q^n-1}{q-1}$ on the points of the
projective space. The first step in this proof is to show that the
proportion of derangements in $\PSL_n(q)$ is large.

Given a positive integer $n$, we denote by $\varphi(n)$ the {\em Euler
  totien function}.
\begin{lemma}\label{lemma:1}Let $n$ be a positive integer with
  $n>6$. Then $\varphi(n)\geq n/\log_2(n)$, and $\varphi(n)\geq
  2n/(\log_3(n)+2)$ if $n$ is odd.
\end{lemma}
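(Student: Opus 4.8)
The plan is to use the multiplicative identity
\[
\frac{n}{\varphi(n)}=\prod_{p\mid n}\frac{p}{p-1}
\]
and to bound each factor by exploiting the fact that the $i$-th smallest prime divisor of $n$ cannot be too small. Write $p_1<p_2<\cdots<p_k$ for the distinct prime divisors of $n$, so $k$ counts how many primes divide $n$. Since $p_i$ is at least the $i$-th prime, which in turn is at least $i+1$, we get $\frac{p_i}{p_i-1}=1+\frac{1}{p_i-1}\leq 1+\frac1i=\frac{i+1}{i}$, and the product telescopes to the clean estimate
\[
\frac{n}{\varphi(n)}=\prod_{i=1}^{k}\frac{p_i}{p_i-1}\leq\prod_{i=1}^{k}\frac{i+1}{i}=k+1 .
\]

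For the first inequality it then suffices to compare $k+1$ with $\log_2 n$. Because the $p_i$ are distinct, $n$ is at least the product of the first $k$ primes, and a one-line induction (every prime past the second exceeds $2$) shows this primorial is at least $2^{k+1}$ once $k\geq 3$; hence $\log_2 n\geq k+1\geq \frac{n}{\varphi(n)}$ in that range. The cases $k\leq 2$ are immediate: for $k=1$ the telescoping bound already gives $\frac{n}{\varphi(n)}\leq 2\leq\log_2 n$ since $n>6$, and for $k=2$ it gives $\frac{n}{\varphi(n)}\leq 3$, while the smallest $n>6$ with two distinct prime divisors is $10$, so $\log_2 n\geq\log_2 10>3$. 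This settles the first claim.

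For the odd case I would run the same scheme, but now every prime divisor is at least $3$, so the $i$-th smallest is at least the $i$-th odd prime, hence at least $2i+1$. This yields $\frac{p_i}{p_i-1}\leq\frac{2i+1}{2i}$ and therefore $\frac{n}{\varphi(n)}\leq P_k:=\prod_{i=1}^{k}\frac{2i+1}{2i}$. The product $P_k$ no longer telescopes, and this is the main technical point of the lemma. I would control it by $P_k\leq\exp\!\big(\tfrac12\sum_{i=1}^k\tfrac1i\big)\leq\sqrt{e}\,\sqrt{k}$, which grows only like $\sqrt{k}$, whereas $n$ is at least the product of the first $k$ odd primes, so $\log_3 n\geq\sum_{i=1}^k\log_3(2i+1)$, growing like $k\log k$. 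Since the target $\varphi(n)\geq 2n/(\log_3 n+2)$ is equivalent to $2\,\frac{n}{\varphi(n)}-2\leq\log_3 n$, I would establish the cleaner sufficient inequality $2P_k-2\leq\sum_{i=1}^k\log_3(2i+1)$ by induction on $k$. The inductive step reduces, after the identity $2P_{k+1}-2P_k=\frac{P_k}{k+1}$, to showing $\frac{P_k}{k+1}\leq\log_3(2k+3)$; this follows for $k\geq 2$ from $\frac{P_k}{k+1}\leq\frac{\sqrt e}{\sqrt k}<\log_3 7\leq\log_3(2k+3)$, and the base cases $k=1,2$ are checked directly. The main obstacle is thus not any single estimate but organizing the odd-case product bound together with the odd-primorial comparison so that the induction closes cleanly.
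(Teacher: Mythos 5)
Your proof is correct, but it diverges from the paper's in an instructive way, most visibly in the odd case. The paper uses the \emph{weaker} lower bound that the $i$-th smallest odd prime divisor of $n$ is at least $i+2$, precisely so that the product telescopes there as well: $\varphi(n)=n\prod_{i}(1-1/p_i)\geq n\cdot\frac{2}{3}\cdot\frac{3}{4}\cdots\frac{\ell+1}{\ell+2}=\frac{2n}{\ell+2}$, and then $\ell\leq\log_3(n)$ finishes the odd inequality in one line. By insisting on the sharper bound $p_i\geq 2i+1$ you destroy the telescoping and are forced into the estimate $P_k\leq\sqrt{e}\sqrt{k}$ and the induction on $k$; all of that checks out (the base cases, the identity $2P_{k+1}-2P_k=P_k/(k+1)$, and the comparison with $\log_3(2k+3)$ are each correct), but it is machinery that the right choice of prime bound makes unnecessary, so the ``main technical point'' you identify is really an artifact of that choice. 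Conversely, for the unrestricted inequality $\varphi(n)\geq n/\log_2(n)$ your primorial comparison ($n$ is at least the product of the first $k$ primes, hence $\log_2 n\geq k+1\geq n/\varphi(n)$ for $k\geq 3$, with $k\leq 2$ checked directly) is arguably cleaner and more uniform than the paper's argument, which splits into three cases according to the $2$-part of $n$ (a power of two, odd, or mixed, with a separate numerical check at $n=10$) and routes the odd case through the second inequality. So each approach buys something: the paper's is shorter on the odd case, yours avoids the case analysis on the general one.
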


\begin{proof}
  Write $n=2^{\alpha_0}p_1^{\alpha_1}\cdots p_\ell^{\alpha_\ell}$
  where $\alpha_0,\ell\geq 0$ and $p_1,\ldots,p_\ell$ are distinct odd
  primes with $p_1<p_2<\cdots <p_\ell$ and
  $\alpha_1,\ldots,\alpha_\ell\geq 1$. Also write
  $m=n/2^{\alpha_0}$. Observe that $\ell\leq \log_3(m)$.

Assume first that $\ell=0$, that is, $n=2^{\alpha_0}$ is a $2$-power.
Observe that $\alpha_0\geq 3$ because $n>6$. Then $\varphi(n)=n/2\geq n/\alpha_0=n/\log_2(n)$. Assume next that $n$ is odd, that is, $\alpha_0=0$ and $n=m$. Now,
\begin{eqnarray*}
\varphi(n)&=&n\left(1-\frac{1}{p_1}\right)\left(1-\frac{1}{p_2}\right)\cdots \left(1-\frac{1}{p_\ell}\right)\geq n\cdot \frac{2}{3}\cdot\frac{3}{4}\cdots\frac{\ell+1}{\ell+2}\\
&=&\frac{2n}{\ell+2}\geq \frac{2n}{\log_3(n)+2}\geq \frac{n}{\log_2(n)},
\end{eqnarray*}
where the last inequality follows from an easy computation.

Finally assume that $\alpha_0>0$ and $\ell>0$. Now,
\begin{eqnarray*}
\varphi(n)&=&2^{\alpha_0-1}\varphi(m)\geq 2^{\alpha_0}\frac{m}{\log_3(m)+2}=\frac{n}{\log_3(m)+2}.
\end{eqnarray*}
If $\alpha_0\geq 2$, then $\log_3(m)+2\leq
\log_2(m)+\alpha_0=\log_2(n)$. Suppose that $\alpha_0=1$.  Now, if
$n\geq 14$, then we get $\log_3(m)+2=\log_3(n/2)+2\leq \log_2(n)$ and
the lemma follows. If $n=10$, then the lemma follows with a direct
computation.
\end{proof}

Let $n$ be a positive integer and let $q$ be a prime power.
\begin{lemma}\label{lemma:2}
The proportion of derangements in $\PSL_n(q)$ is at least $\frac{1}{n^2\log_2(q)}$.
\end{lemma}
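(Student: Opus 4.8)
The plan is to exhibit an explicit family of derangements large enough to force the stated proportion, exploiting the fact that an element of $\PSL_n(q)$ acts on the projective space as a derangement precisely when its preimage in $\mathrm{SL}_n(q)$ has no eigenvalue in $\mathbb{F}_q$.

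First I would reduce the count from $\PSL_n(q)$ to $\mathrm{SL}_n(q)$. Writing $\PSL_n(q)=\mathrm{SL}_n(q)/Z$ with $Z$ the group of scalar matrices of $\mathrm{SL}_n(q)$, a coset $gZ$ fixes a projective point $[v]$ if and only if $v$ is an eigenvector of $g$; since the scalars in $Z$ merely rescale eigenvalues, the condition ``$g$ has no eigenvalue in $\mathbb{F}_q$'' is constant on $Z$-cosets. Hence the proportion of derangements in $\PSL_n(q)$ equals the proportion of $g\in\mathrm{SL}_n(q)$ having no eigenvalue in $\mathbb{F}_q$, and it suffices to produce many such $g$. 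The cleanest sufficient condition is that the characteristic polynomial of $g$ be irreducible of degree $n$: since $n\ge 2$ it then has no root in $\mathbb{F}_q$, so $g$ is a derangement.

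Next I would count these irreducible elements via field theory. Set $s=(q^n-1)/(q-1)$ and let $K\le\mathbb{F}_{q^n}^{*}$ be the cyclic kernel of the norm map $N\colon\mathbb{F}_{q^n}^{*}\to\mathbb{F}_q^{*}$, so $|K|=s$. If $\theta$ generates $K$ it has multiplicative order $s$, and since $s>q^d-1$ for every proper divisor $d\mid n$, $\theta$ lies in no proper subfield and hence has degree $n$ over $\mathbb{F}_q$; its minimal polynomial is monic irreducible of degree $n$, and as $N(\theta)=1$ the product of its roots is $1$, so the associated companion matrix lies in $\mathrm{SL}_n(q)$ and is a derangement. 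Frobenius preserves $K$ and its set of generators, so the $\varphi(s)$ generators of $K$ split into orbits of size exactly $n$, yielding $\varphi(s)/n$ such irreducible polynomials; each is the characteristic polynomial of a single $\mathrm{GL}_n(q)$-class of regular semisimple elements, of size $|\mathrm{GL}_n(q)|/(q^n-1)$ (the centralizer being $\cong\mathbb{F}_{q^n}^{*}$), all of determinant $1$. Dividing by $|\mathrm{SL}_n(q)|=|\mathrm{GL}_n(q)|/(q-1)$ gives a proportion of at least
\[
\frac{\varphi(s)}{n}\cdot\frac{|\mathrm{GL}_n(q)|}{(q^n-1)\,|\mathrm{SL}_n(q)|}=\frac{\varphi(s)}{n}\cdot\frac{q-1}{q^n-1}=\frac{\varphi(s)}{ns}.
\]

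Finally I would invoke Lemma~\ref{lemma:1}. For $s>6$ it gives $\varphi(s)\ge s/\log_2 s$, so the proportion is at least $1/(n\log_2 s)$; since $s=(q^n-1)/(q-1)<q^n$ we have $\log_2 s<n\log_2 q$, whence the proportion exceeds $1/(n^2\log_2 q)$, as required. The only cases with $s\le 6$ are $n=2$ with $q\in\{4,5\}$ (recall that $(n,q)=(2,2),(2,3)$ are excluded), which I would dispatch by direct computation. I expect the main obstacle to be the conjugacy-class bookkeeping in the third step: verifying that the matrices with a fixed irreducible characteristic polynomial form a single $\mathrm{GL}_n(q)$-class with centralizer $\mathbb{F}_{q^n}^{*}$, that they have determinant $1$, and that the $\varphi(s)$ generators of $K$ fall into exactly $\varphi(s)/n$ Frobenius orbits, together with the careful passage between proportions in $\mathrm{GL}_n(q)$, $\mathrm{SL}_n(q)$ and $\PSL_n(q)$.
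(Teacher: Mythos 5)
Your proof is correct and is essentially the paper's own argument: both count the conjugates of generators of a Singer (anisotropic maximal) torus --- elements with irreducible characteristic polynomial of degree $n$ --- obtain a proportion of roughly $\varphi(\text{torus order})/(n\cdot\text{torus order})$, and finish with Lemma~\ref{lemma:1}. The only difference is in execution: you do the bookkeeping in $\mathrm{GL}_n(q)/\mathrm{SL}_n(q)$ via companion matrices and Frobenius orbits and then pass the proportion down to $\PSL_n(q)$, whereas the paper works directly with the Singer cycle $C$ of $\PSL_n(q)$ (of order $(q^n-1)/(\gcd(n,q-1)(q-1))$), quoting the standard facts $C=\cent{G}{C}$ and $|\norm{G}{C}:C|=n$; the residual small cases handled by direct computation differ accordingly ($n=2$, $q\in\{4,5\}$ for you versus $n=2$, $q\le 11$ in the paper) but are harmless either way.
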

\begin{proof}

  In the next paragraph we recall some information on the conjugacy
  classes of $\PSL_n(q)$, which can be found  for instance in~\cite{C1,C2}.

  Let $p$ be the proportion of derangements of $G$ in its natural
  action on the projective space and let $C$ be a Singer cycle of
  $G$. Now, $C$ is a cyclic group of order $\frac{q^n-1}{d(q-1)}$,
  where $d=\gcd(n,q-1)$. Moreover, $C=\cent GC$, $|\norm G C:\cent G
  C|=n$ and $\norm G C/\cent G C$ is cyclic and generated by an
  element of the Weyl group of $G$. Every non-identity element of $C$
  acts fixed-point-freely. Moreover, for every $x\in C$ with
  $C=\langle x\rangle$, we have $C=\cent G x$.

  Fix $$D=\bigcup_{\substack{x\in C\\C=\langle x\rangle}}x^G.$$ Now
  $p\geq |D|/|G|$ and, from the previous paragraph, $D$ is the union
  of at least $\varphi(|C|)/n$ $G$-conjugacy classes and each conjugacy
  class has size $|G:C|$. If $|C|>6$, using Lemma~\ref{lemma:1}, we
  obtain
\begin{eqnarray*}
p&\geq& \frac{\frac{\varphi(|C|)}{n}\cdot |G:C|}{|G|}\geq\frac{1}{n\log_2(|C|)}\\
&\geq&\frac{1}{n\log_2((q^n-1)/(q-1))}\geq\frac{1}{n\log_2(q^n)}=\frac{1}{n^2\log_2(q)}.
\end{eqnarray*}

If $|C|\leq 6$, then $n=2$ and $q\leq 11$. For each of these groups we
can check (with a case-by-case explicit computation) that the
statement of the lemma holds.
\end{proof}
In light of recent results by Fulman and
Guralnick~\cite{G1,G2,G22,G3}, Lemma~\ref{lemma:2} is rather weak (but
still suitable for our application).  In fact, answering a conjecture
due independently to Shalev and Boston et al.~\cite{B}, Fulman and
Guralnick have proved that there exists a constant $C>0$ such that, in
every non-abelian simple transitive permutation group, the proportion
of elements which are derangements is at least $C$. In particular, in
view of this remarkable theorem, in Lemma~\ref{lemma:2} one might
replace the function $1/n^2\log_2(q)$ by the constant $C$. However,
even for rather natural transitive actions, currently there is no good
estimate on $C$: in fact, we are not aware of any bound uniform in $q$
and $n$ for the proportion of derangements in $\PSL_n(q)$ in its
natural action on the projection space. Following the arguments in the
work of Fulman and Guralnick this seems possible to achieve (at least
for $q\geq 7$), but it would take us too far astray to do it here.

\begin{proposition}\label{prop:PSL}Let $G$ be a $2$-transitive group as in line~$2$ of Table~$\ref{table0}$. Then Theorem~$\ref{main}$ holds for $G$.
\end{proposition}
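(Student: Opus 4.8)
The plan is to follow the strategy codified in Lemma~\ref{lemma:8}: I will show that, for $G=\PSL_n(q)$ in its natural action on the $|\Omega|=(q^n-1)/(q-1)$ projective points, the minimum eigenvalue of $\Gamma_G$ is $\lambda(\psi)=-|\mathcal{D}|/(|\Omega|-1)$ and that $\psi$ is the \emph{unique} irreducible character attaining it; this yields both Theorem~\ref{main} and Conjecture~\ref{newconjecture}. Note $\psi(1)=|\Omega|-1=(q^n-q)/(q-1)$. Arguing by contradiction, suppose some $\chi^*\in\Irr(G)$ with $\chi^*\neq\psi$ affords the minimum eigenvalue, so $\lambda(\chi^*)\leq\lambda(\psi)<0$. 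Combining Lemma~\ref{critical} with the derangement estimate of Lemma~\ref{lemma:2} (which gives $|G|/|\mathcal{D}|\leq n^2\log_2 q$), I obtain the degree bound
\[
\chi^*(1)\;\leq\;(|\Omega|-1)\sqrt{\frac{|G|}{|\mathcal{D}|}-2}\;\leq\;\frac{q^n-q}{q-1}\,\sqrt{n^2\log_2 q-2},
\]
so $\chi^*$ has degree of order at most $q^{n-1}n\sqrt{\log_2 q}$, only a modest factor above $\psi(1)\approx q^{n-1}$.

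The second step is to feed this into the known classification of the low-dimensional irreducible characters of $\SL_n(q)$ and $\PSL_n(q)$: the minimal-degree results and degree gaps of Tiep--Zalesskii, together with the theory of Weil representations of these groups. Apart from the trivial character and $\psi$, every irreducible character of $\PSL_n(q)$ of degree below roughly $q^{2n-3}$ is one of the Weil characters, whose degrees lie in $\{(q^n-q)/(q-1),\,(q^n-1)/(q-1)\}$ and of which only $\gcd(n,q-1)$-many are relevant. Since the right-hand side of the displayed bound is of order $q^{n-1}n\sqrt{\log_2 q}$, which for $n\geq 3$ and $q$ not too small is dominated by the next available degree $q^{2n-3}$, the character $\chi^*$ is forced to be one of these Weil characters. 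The finitely many exceptional pairs $(n,q)$ with $q$ small, and the whole case $n=2$, would be disposed of separately, the latter by the results on $\PSL_2(q)$ already in the literature or by a direct computation.

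Third, for each surviving Weil character $\chi$ I would compute $\lambda(\chi)=\chi(\mathcal{D})/\chi(1)$ directly. A derangement of $\PSL_n(q)$ is exactly (the image of) a matrix with no eigenvalue in $\mathbb{F}_q$, i.e.\ with $\dim_{\mathbb{F}_q}\ker(g-\zeta)=0$ for every $\zeta\in\mathbb{F}_q$. On such elements the Weil character values, which are expressible through these fixed-space dimensions, collapse to bounded quantities (conjecturally $|\chi(g)|\leq 1$). Consequently $|\chi(\mathcal{D})|\leq|\mathcal{D}|$ and hence
\[
|\lambda(\chi)|=\frac{|\chi(\mathcal{D})|}{\chi(1)}\leq\frac{|\mathcal{D}|}{\chi(1)}\leq\frac{|\mathcal{D}|}{(q^n-q)/(q-1)}=|\lambda(\psi)|,
\]
with strict inequality as soon as $\chi(1)>\psi(1)$; since $\lambda(\psi)=-|\lambda(\psi)|$ and $\lambda(\chi)\geq-|\lambda(\chi)|>-|\lambda(\psi)|=\lambda(\psi)$, this already contradicts $\lambda(\chi^*)\leq\lambda(\psi)$. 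The only character of degree exactly $\psi(1)$ is $\psi$ itself, because over $\SL_n(q)$ the determinant twists $\psi\otimes\det^i$ all coincide with $\psi$, so uniqueness also holds. The conclusion then follows from Lemma~\ref{lemma:8}.

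The hard part will be this third step: one must pin down the exact Weil character values on each derangement class of $\PSL_n(q)$ (handling both parities of $\gcd(n,q-1)$) sharply enough to guarantee the strict inequality above, and separately settle the small cases where the crude degree bound fails to isolate the Weil characters. The estimate of Lemma~\ref{lemma:2}, though deliberately weak, is precisely what renders the degree bound usable uniformly in $n$ and $q$.
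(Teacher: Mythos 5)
Your proposal follows essentially the same route as the paper: the bound from Lemma~\ref{critical} combined with Lemma~\ref{lemma:2}, the Tiep--Zalesskii low-degree classification to force $\chi^*$ to be a Weil character of degree $(q^n-1)/(q-1)$, explicit Weil character values on derangements to derive a contradiction, and separate treatment of small $(n,q)$. The only refinement the paper makes to your third step is that the explicit formula of G\'erardin/Guralnick--Tiep gives $\chi^*(g)=0$ exactly on every derangement (not merely $|\chi^*(g)|\leq 1$), so $\lambda(\chi^*)=0$ immediately contradicts $\lambda(\chi^*)\leq\lambda(\psi)<0$.
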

\begin{proof}
From our preliminary reductions we may assume that $G=\PSL_n(q)$ is endowed of its natural action on the points of the projective space. Moreover, from the work in~\cite{KaPa,KaPa2}, we may assume that $n\geq 4$.

Let $\pi$ be the permutation character of $G$ and write $\pi=\chi_0+\psi,$ where $\chi_0,\psi$ are the irreducible constituents of $\pi$ and $\chi_0$ is the principal character of $G$.  We show that $\lambda(\psi)$ is the minimal eigenvalue of $\Gamma_G$ and $\psi$ is the only irreducible complex character of $G$ realising this minimum. We argue by contradiction and we assume that there exists $\chi^*\in \Irr(G)$ with $\chi^*\neq \psi$ and with $\lambda(\chi^*)\leq \lambda(\psi)$. From Lemmas~\ref{critical} and~\ref{lemma:2}, we have
\begin{equation}\label{eq:PSLPSL}
\chi^*(1)\leq\frac{q^n-q}{q-1}\sqrt{n^2\log_2(q)-2}.
\end{equation}

We now use some character-theoretic results of the third author and Zalesskii~\cite{TZ}. For the reader's convenience we reproduce the relevant result from~\cite{TZ} when $n\geq 4$. 

Denote by $1=d_0<d_1<d_2<\ldots<d_\ell$ the character degrees of $G$ and by $N_j$ the number of irreducible complex representations of $G$ (up to equivalence) of degree $d_j$. (The definition of equivalence in this context is in~\cite{TZ} or in~\cite{GT1}.) The value of $(d_i,N_i)$ for $i=1,2,3$ are in Table~\ref{table1}.
\begin{table}[!h]
\begin{tabular}{cccc}\hline
$(d_1,N_1)$&$(d_2,N_2)$&$(d_3,N_3)$&Condition\\\hline
$\left(\frac{q^4-q}{q-1},1\right)$&$\left(\frac{q^4-1}{q-1},q-2\right)$&$(\frac{1}{2}(q^3-1)(q-1),2)$&$n=4,2\nmid q$, $q\neq 3$\\
$\left(\frac{q^4-q}{q-1},1\right)$&$\left(\frac{q^4-1}{q-1},q-2\right)$&$((q^3-1)(q-1),q/2)$&$n=4,2\mid q$, $q\neq 2$\\
$(7,1)$&$(8,1)$&$(14,1)$&$(n,q)=(4,2)$\\
$(26,2)$&$(39,1)$&$(40,1)$&$(n,q)=(4,3)$\\

$\left(\frac{q^n-q}{q-1},1\right)$&$\left(\frac{q^n-1}{q-1},q-2\right)$&$\left(\frac{(q^n-1)(q^{n-1}-q^2)}{(q-1)(q^2-1)},1\right)$&$n\geq 5, q\geq 3, (n,q)\neq (6,3)$\\
$\left(\frac{q^n-q}{q-1},1\right)$&$\left(\frac{(q^n-1)(q^{n-1}-q^2)}{(q-1)(q^2-1)},1\right)$&$\left(\frac{(q^n-1)(q^{n-1}-1)}{(q-1)(q^2-1)},1\right)$&$n\geq 5,n\neq 6,q=2$\\

$(62,1)$&$(217,1)$&$(588,1)$&$(n,q)=(6,2)$\\
$(363,1)$&$(364,1)$&$(6292,2)$&$(n,q)=(6,3)$\\\hline
\end{tabular}
\caption{Small character  degrees of $\PSL_n(q)$}\label{table1}
\end{table}

Comparing Eq.~\eqref{eq:PSLPSL} with Table~\ref{table1}, we are left with one of the following cases:
\begin{description}
\item[(1)] \label{en:1} $q>2$, $n\geq 5$, $\chi^*(1)=d_2$,
\item[(2)]\label{en:2} $q=2$, $n\in \{5,6\}$,
\item[(3)]\label{en:3} $q\geq 16$, $q\neq 17$, $n=4$ and $\chi^*(1)=d_2$,
\item[(4)]\label{en:4} $q\in \{2,3,4,5,7,8,9,11,13,17\}$ and $n=4$.
\end{description}
Now, with the computer algebra system \texttt{Magma}~\cite{magma}, we can check directly that in Cases~(2) and~(4) the minimum eigenvalue is $\lambda(\psi)$ and is realised only by $\psi$. It remains to consider Cases~(1) and~(3). Here, $\chi^*$ is one of the irreducible characters of $G$ of degree $(q^n-1)/(q-1)$. These characters are named Weil characters, after the pioneering work of Andr\'{e} Weil in~\cite{Weil}. An extensive study of Weil characters has begun in~\cite{Gerardin} and now the value of each  Weil character in each conjugacy class of $G$ is explicitly known. For example it is given in the account of Guralnick and the third author (see Eq.~$(1)$ on page~$4976$ in~\cite{GT}.) From this formula it is immediate to see that $\lambda(\chi^*)=0$, contradicting the fact that $\lambda(\chi^*)\leq \lambda(\psi)$.
\end{proof}
The proof of Proposition~\ref{prop:PSL} can be slightly shortened and simplified using a suitable weighted adjacency matrix and using the method we present in the next section for dealing with the Symplectic groups.

\section{Symplectic groups: lines~$3$ and~$4$ of Table~\ref{table0}}\label{Sec:symplectic}

The proof of Theorem~\ref{main} for the two $2$-transitive actions of
the symplectic group $\Sp_{2n}(2)$ is similar to a combination of the
proofs in the projective linear  and  unitary cases. The
investigations of Guralnick and the third author in~\cite{GT} on the
irreducible complex characters of $\Sp_{2n}(2)$ of ``small''
degree will be crucial to this proof. Details about the symplectic
group can be found in \cite{C1,C2}. We start by setting some notation. 

We fix $n$ to be a natural number with $n\geq 3$. We let
$G=\Sp_{2n}(2)$ and we let $V$ be the $2n$-dimensional vector space
over the finite field $\mathbb{F}_2$.  (Thus $V$ is the natural
module for $G$.) The group $G$ has two natural $2$-transitive
actions. Both actions can be seen by viewing $\Sp_{2n}(2)=\Omega_{2n+1}(2)$,
and recalling that $\Omega_{2n+1}(2)$ has a natural action on the
non-degenerate hyperspaces of plus type (which we denote by
$\Omega^+$) and on the non-degenerate hyperspaces of minus type (which
we denote by $\Omega^-$). We have
\begin{equation*}
|\Omega^+|=2^{n-1}(2^n+1), \qquad|\Omega_n^-|=2^{n-1}(2^n-1).
\end{equation*} Moreover, the stabiliser of an element of $\Omega_n^+$ is $H^+=\mathrm{O}_{2n}^+(2)$ and the stabiliser of an element of $\Omega_n^-$ is $H^-=\mathrm{O}_{2n}^-(2)$. 

Given $\varepsilon\in \{+,-\}$, we denote by $\pi^\varepsilon$ the permutation character of $G$ in its action on $\Omega^\varepsilon$ and by $\mathcal{D}^\varepsilon$ the set of derangements of $G$ in its action on $\Omega^\varepsilon$. We write 
\begin{equation}\label{rhopm}
\pi^+=\chi_0+\rho^+, \qquad \pi^-=\chi_0+\rho^-,
\end{equation} where $\chi_0$ is the principal character of $G$.
(Later in our arguments we will be using~\cite{GT}; hence we point
out that the characters $\rho^+$ and $\rho^-$ are studied
in~\cite[Section~6]{GT} in which $\rho^+$ is
denoted by $\rho_n^2$, and $\rho^-$ is denoted by $\rho_n^1$.)

We recall that
$\chi_0+\rho^++\rho^-$
is the permutation character for the transitive action of $G$ on the non-zero elements of $V$ (see for example~\cite{GT}, or~\cite{mePa} for a combinatorial proof), and hence 
\begin{equation}\label{rhoppmm}
(\rho^++\rho^-)(g)=2^{\dim\Ker(g-1_V)}-2
\end{equation}
for every $g\in G$.

In the next paragraph we recall some information on the conjugacy
classes of $G=\Sp_{2n}(2)$, we again refer the reader to~\cite{C1,C2}.

If $\varepsilon = +$, then $\varepsilon \cdot 1$ represents $1$;
similarly, if $\varepsilon = -$, then $\varepsilon \cdot 1$ represents
$-1$. Embedding 
$$T^\varep = \Omega^{-\varep}_2(2^n) < {\mathrm {SL}}_2(2^n) \cong {\mathrm {Sp}}_2(2^n)$$ 
naturally in $G = {\mathrm {Sp}}_{2n}(2)$ (by identifying
$W = \bbF_{2^n}^2$ with $V = \bbF_2^{2n}$), we see that 
$G$ contains a cyclic maximal torus $T^\varepsilon$ of order
$2^n+\varepsilon\cdot 1$. 
Fix a generator $x^\varepsilon$ of $T^\varepsilon$. We claim  that
that $x^\varepsilon$ is contained in a unique conjugate of $H^{-\varep}$ and
$x^\varepsilon\in \mathcal{D}^\varepsilon$; equivalently,
\begin{equation}\label{value}
  \rho^\varep(x^\varep) = -1,\qquad\rho^{-\varep}(x^\varep) = 0.
\end{equation}  
This can be seen as follows. By construction, $x^\varep$ has no nonzero 
fixed points on $W$ and so on $V$ as well. Hence Eq. \eqref{rhoppmm} implies that 
$\rho^+(x^\varep) + \rho^-(x^\varep) = -1$. On the other hand, $\pi^+(x^\varep) = 1+\rho^+(x^\varep)$ is the number of 
$x^\varep$-fixed points on $\Omega^+$, and so ${\mathbb Z} \ni \rho^+(x^\varep) \geq -1$; similarly, 
${\mathbb Z} \ni \rho^-(x^\varep) \geq -1$. It follows that 
$$\{\rho^+(x^\varep),\rho^-(x^\varep)\} = \{0,-1\}.$$ 
Also note that if $\varep = -$ then 
$T^-=\Omega^+_2(2^n)$ can be embedded in $H^+=\Omega^+_{2n}(2)$ again by base change, so
$\rho^+(x^-) \neq -1$. Next, for $n > 3$ let $p$ be a {\it primitive prime divisor} of 
$2^{2n}-1$ (cf. \cite{Zs}), so that $p\mid(2^n+1) = |x^+|$ but $p \nmid |H^+|$. It follows 
for $n > 3$ that $x^+$ cannot be contained in any conjugate of $H^+$, whence 
$\rho^+(x^+) = -1$. The same holds for $n = 3$ by inspecting \cite{ATLAS}.  Consequently,
Eq. \eqref{value} holds for all $n \geq 3$.

We also note that
$T^\varepsilon= \bfC_G(T^\varep) = \cent G{x^\varepsilon}$ and hence 
$|\cent G{x^\varepsilon}|=2^n+\varepsilon\cdot 1$.

Let $A^\varepsilon$ be the square matrix indexed by the elements of $G$ with
\[
(A^\varepsilon)_{g,h}=
\begin{cases}
1&\textrm{if }gh^{-1}\in (x^\varepsilon)^G,\\
0&\textrm{if }gh^{-1}\notin (x^\varepsilon)^G.
\end{cases}
\]
As $(x^\varepsilon)^G\subseteq \mathcal{D}^\varepsilon$, we see that $A^\varepsilon$ is a weighted adjacency matrix for the derangement graph of $G$ in its action on $\Omega^\varepsilon$.

\begin{proposition}\label{prop:Symp}
Let $G$ be a $2$-transitive group as in line~$3$ or~$4$ of Table~$\ref{table0}$. Then Theorem~$\ref{main}$ holds for $G$.
\end{proposition}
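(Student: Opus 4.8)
The plan is to feed the weighted ratio-bound (Lemma~\ref{weightedratio}) the matrix $A^\varepsilon$ already introduced above, whose only nonzero weight is a $1$ on the single derangement class $(x^\varepsilon)^G$. Because this weight is non-negative, the principal character affords the largest eigenvalue of $A^\varepsilon$, namely $|(x^\varepsilon)^G|=|G|/(2^n+\varep\cdot 1)$. By Eq.~\eqref{eq:weightedevalues} together with the values recorded in Eq.~\eqref{value}, the eigenvalue afforded by $\rho^\varepsilon$ is
\[
\frac{|(x^\varepsilon)^G|\,\rho^\varepsilon(x^\varepsilon)}{\rho^\varepsilon(1)}=\frac{-|(x^\varepsilon)^G|}{|\Omega^\varepsilon|-1},
\]
while the eigenvalue afforded by $\rho^{-\varepsilon}$ is $0$ and that afforded by $\chi_0$ is positive. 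If I can show that $-|(x^\varepsilon)^G|/(|\Omega^\varepsilon|-1)$ is the \emph{minimum} eigenvalue of $A^\varepsilon$, then the factor in Lemma~\ref{weightedratio} becomes $1-d/\tau=|\Omega^\varepsilon|$, and the lemma yields $|S|\leq|G|/|\Omega^\varepsilon|$ for every coclique $S$ of the derangement graph, which is precisely Theorem~\ref{main} for this action.

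To establish minimality I would argue by contradiction exactly as in the linear and unitary cases. Suppose some irreducible $\chi^*\neq\rho^\varepsilon$ affords the minimum eigenvalue of $A^\varepsilon$. Applying Lemma~\ref{weightedcritical} with the single nonzero weight $a=1$ on $(x^\varepsilon)^G$ collapses the radicand to $|G|/|(x^\varepsilon)^G|-2=2^n+\varep\cdot 1-2$, so that
\[
\chi^*(1)\leq(|\Omega^\varepsilon|-1)\sqrt{2^n+\varep\cdot 1-2}.
\]
This is the exact analogue of Eq.~\eqref{eq:PSLPSL}; its right-hand side is of order $2^{(5n-2)/2}$, roughly a factor $2^{n/2}$ above the degree $\rho^\varepsilon(1)=|\Omega^\varepsilon|-1\approx 2^{2n-1}$.

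Next I would compare this degree bound against the classification of the small-degree irreducible characters of $\Sp_{2n}(2)$ from~\cite{GT}, which plays here the role that Table~\ref{table1} (from~\cite{TZ}) played in the projective linear case. For $n$ large the only irreducible characters of degree below $(|\Omega^\varepsilon|-1)\sqrt{2^n+\varep\cdot1-2}$ are $\chi_0$, $\rho^+$ and $\rho^-$; every further character of $\Sp_{2n}(2)$ has degree exceeding the bound. Thus the only candidate for $\chi^*$ (other than the excluded $\rho^\varepsilon$) is $\rho^{-\varepsilon}$, and since $\rho^{-\varepsilon}(x^\varepsilon)=0$ its eigenvalue is $0$, strictly larger than $-|(x^\varepsilon)^G|/(|\Omega^\varepsilon|-1)<0$ — a contradiction. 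The finitely many exceptional ranks, where the degree spectrum of $\Sp_{2n}(2)$ is anomalous (e.g. $\Sp_6(2)$, which has irreducible characters of degree $7,15,21,\dots$ lying below the bound), I would dispatch by a direct computation in \texttt{Magma}, just as the small projective linear groups were handled in Proposition~\ref{prop:PSL}.

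The main obstacle is the input extracted from~\cite{GT}. One needs not merely the list of irreducible characters of degree at most $(|\Omega^\varepsilon|-1)\sqrt{2^n+\varep\cdot1-2}$, but also their precise values on the regular semisimple generator $x^\varepsilon$ of the cyclic maximal torus $T^\varepsilon$ of order $2^n+\varep\cdot 1$. In the linear case a single Weil-character formula of~\cite{GT} gave $\lambda(\chi^*)=0$ outright; here the delicate point is to guarantee, uniformly in $n$, that no small-degree character takes a value on $x^\varepsilon$ negative enough to drive its weighted eigenvalue below that of $\rho^\varepsilon$, while cleanly separating this generic behaviour from the exceptional small ranks that must be checked by machine.
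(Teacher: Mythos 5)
Your overall strategy coincides with the paper's: weight only the single conjugacy class $(x^\varepsilon)^G$ of the generator of the cyclic maximal torus of order $2^n+\varepsilon\cdot 1$, apply Lemma~\ref{weightedratio}, and rule out a hypothetical minimiser $\chi^*\neq\rho^\varepsilon$ via the degree bound of Lemma~\ref{weightedcritical} combined with the low-degree classification from~\cite{GT}. However, there is a genuine gap at the decisive step. Your claim that for large $n$ the only irreducible characters of degree below $(|\Omega^\varepsilon|-1)\sqrt{2^n+\varepsilon\cdot 1-2}\approx 2^{(5n-2)/2}$ are $\chi_0$, $\rho^+$ and $\rho^-$ is false: by \cite[Theorem~1.1]{GT} the group $\Sp_{2n}(2)$ also has the three Weil characters $\alpha_n$, $\beta_n$, $\zeta_n^1$ of degrees $\tfrac{(2^{n-1}-1)(2^n-1)}{3}$, $\tfrac{(2^{n-1}+1)(2^n+1)}{3}$ and $\tfrac{2^{2n}-1}{3}$, all of order $2^{2n}$ and hence well below your bound (two of them are even smaller than $\rho^\pm(1)$). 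They survive the degree sieve for \emph{every} $n$, not just finitely many exceptional ranks, so they cannot be dispatched by a \texttt{Magma} check and your contradiction does not yet arrive.

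Handling these three characters is in fact the bulk of the paper's proof. One shows $\zeta_n^1(x^\varepsilon)=0$ from the explicit Weil character formula (no eigenvalue of $x^\varepsilon$ on $V\otimes\mathbb{F}_4$ equals $1$, $\xi$ or $\xi^2$), and then uses the identities $\zeta_n=\alpha_n+\beta_n+2\zeta_n^1$ and $\mathrm{Ind}^G_{H^+}(\kappa_+)+\mathrm{Ind}^G_{H^-}(\kappa_-)=\zeta_n$, together with the degree comparison that forces $\mathrm{Ind}^G_{H^+}(\kappa_+)=\beta_n+\zeta_n^1$ and $\mathrm{Ind}^G_{H^-}(\kappa_-)=\alpha_n+\zeta_n^1$, to conclude $\alpha_n(x^-)=\beta_n(x^+)=0$ and $\alpha_n(x^+)=\beta_n(x^-)=1$. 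All the resulting weighted eigenvalues are $\geq 0$, which is what actually completes the contradiction. Your closing paragraph correctly identifies that the character values on $x^\varepsilon$ are the delicate input, but the proof cannot be closed without supplying this computation; as written, the argument fails at the exclusion of $\alpha_n$, $\beta_n$ and $\zeta_n^1$.
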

\begin{proof}
We can check with a computer that this holds when $n\leq 6$. Therefore we assume that $n\geq 7$.

We use the notation that we have established above.  Moreover, given $\chi\in \Irr(G)$, we denote by $\lambda(\chi,\varepsilon)$ the eigenvalue of $A^\varepsilon$ afforded by $\chi$ (see Eq.~\eqref{eq:weightedevalues}). We determine the maximum and the minimum eigenvalue of $A^\varepsilon$. Since all the weights are non-negative real numbers, the largest eigenvalue $d^\varepsilon$ of $A^\varepsilon$ is realised by the principal character of $G$. Therefore 
\begin{equation}\label{eq:maximumSp}
d^\varepsilon=\lambda(\chi_0,\varepsilon)=|(x^\varepsilon)^G|=\frac{|G|}{|\cent G {x^\varepsilon}|}=\frac{|G|}{2^n+\varepsilon\cdot 1}.
\end{equation}

Let $\tau^\varepsilon$ be the minimum eigenvalue of $A^\varepsilon$. We prove that $\tau^\varepsilon=\lambda(\rho^\varepsilon,\varepsilon)$ and that $\rho^\varepsilon$ is the unique character affording $\tau^\varepsilon$.  We argue by contradiction and we assume that there exists $\chi^*$ with $\chi^*\neq \rho^\varepsilon$ and $\lambda(\chi^*,\varepsilon)\leq \lambda(\rho^\varepsilon,\varepsilon)$.

Lemma~\ref{weightedcritical} gives
\begin{eqnarray}\label{eq:5very}
\chi^*(1)&\leq& (|\Omega^\varepsilon|-1)\sqrt{|G|\frac{|(x^\varepsilon)^G|}{|(x^\varepsilon)^G|^2}-2}\\\nonumber
&=&(|\Omega^\varepsilon|-1)\sqrt{|\cent G{x^\varepsilon}|-2}=(|\Omega^\varepsilon|-1)\sqrt{2^n+\varepsilon\cdot 1-2}.
\end{eqnarray}
The main result of Guralnick and the third author~\cite[Theorem~$1.1$ and Table~$1$]{GT} shows that there exist  irreducible characters  $\alpha_n$, $\beta_n$ and $\zeta_n^1$ with  
$$\alpha_n(1)=\frac{(2^{n-1}-1)(2^n-1)}{3},~\beta_n(1)=\frac{(2^{n-1}+1)(2^n+1)}{3},\zeta_n^1(1)=\frac{2^{2n}-1}{3}
$$ such that, if 
$\chi\in \Irr(G)$ and $\chi(1)>1$, then either
\begin{description} 
\item[(1)] $\chi\in \{\rho^+,\rho^-,\alpha_n,\beta_n,\zeta_n^1\}$, or
\item[(2)] $\chi(1)\geq ((2^{n-1}+1)(2^{n-2}-2)/3-1)2^{n-2}(2^{n-1}-1)$.
\end{description}
We have
$$\left(\frac{(2^{n-1}+1)(2^{n-2}-2)}{3}-1\right)2^{n-2}(2^{n-1}-1)>(|\Omega^\varepsilon|-1)\sqrt{2^n+\varepsilon\cdot 1-2}$$
and hence, by Eq.~\eqref{eq:5very}, we have $\chi^*\in \{\rho^+,\rho^-,\alpha_n,\beta_n,\zeta_n^1\}$. 

From Eq.~\eqref{eq:weightedevalues}, we have
\begin{equation}\label{minimumSp}
\lambda(\rho^\varepsilon,\varepsilon)=\frac{-|(x^\varepsilon)^G|}{|\Omega^\varepsilon|-1}=-\frac{|G|}{(2^n+\varepsilon\cdot 1)(|\Omega^\varepsilon|-1)}.
\end{equation}
Now, Eq.~\eqref{value} yields $\lambda(\rho^{-\varepsilon},\varepsilon)\geq 0>\lambda(\rho^\varepsilon,\varepsilon)$ and hence $\chi^*\in \{\alpha_n,\beta_n,\zeta_n^1\}$.

Recall that $V=\mathbb{F}_2^{2n}$ is the natural module for
$G$. Observe that by taking the tensor product
$V\otimes_{\mathbb{F}_2}\mathbb{F}_4$ we may view $V$ as a
$2n$-dimensional vector space over $\mathbb{F}_4$. Let $\xi$ be a
primitive third root of unity in $\mathbb{F}_4$ and let $\bar{\xi}$ be
a primitive third root of unity in $\mathbb{C}$. Clearly, $1$, $\xi$
and $\xi^2$ are not eigenvalues of the matrices $x^+$ and $x^-$ in
their action on $V$ and hence $\Ker
(x^\varepsilon-1_V)=\Ker(x^\varepsilon-\xi\cdot
1_V)=\Ker(x^\varepsilon-\xi^2\cdot 1_V)=0$. Now it follows
from~\cite[page 4997, Eq.~(4)]{GT} that
\begin{equation}\label{eq:zetazeta}
\zeta_n^1(x^\varepsilon)=\frac{1}{3}(1+\bar{\xi}+\bar{\xi}^2)=0.
\end{equation}
Therefore from Eq.~\eqref{eq:weightedevalues} we have $\lambda(\zeta_n^1,\varepsilon)=0>\lambda(\rho^\varepsilon,\varepsilon)$ and hence $\chi^*\in \{\alpha_n,\beta_n\}$.

Consider the map $\zeta_n:G\to \mathbb{C}$ defined by $\zeta_n(g)=(-2)^{\dim\Ker(g-1_V)}$. It turns out (see~\cite[pages~4976,~4977]{GT}) that $\zeta_n$ is a character of $G$ and 
\begin{equation}\label{weil1}
  \zeta_n=\alpha_n+\beta_n+2\zeta_n^1.
\end{equation}  
Next, consider the map $\kappa_\varepsilon:\mathrm{O}_{2n}^\varepsilon(2)\to
\{-1,1\}$ defined by $\kappa_\varepsilon(g)=(-1)^{\dim
  \Ker(g-1_V)}$. Now, $\kappa_\varepsilon$ is actually a homomorphism
(see for example~\cite[page~xii]{ATLAS}) with kernel the index $2$
subgroup $\Omega_{2n}^\varepsilon(2)$ of
$\mathrm{O}_{2n}^\varepsilon(2)$, and hence $\kappa_\varepsilon$ is a
character of $H^\varep = \mathrm{O}_{2n}^\varepsilon(2)$. It is shown
in~\cite[Eq. (11)]{GT} that
\begin{equation}\label{weil2}
  {\mathrm {Ind}}^G_{H+}(\kappa_+) + {\mathrm {Ind}}^G_{H^-}(\kappa_-) = \zeta_n.
\end{equation}  
Note that
$$\alpha_n(1)+\beta_n(1) < \alpha_n(1) + \zeta_n^1(1) = [G:H^-] < [G:H^+] = \beta_n(1) + \zeta_n^1(1) < 2\zeta^1_n(1).$$
Together with Eqs. \eqref{weil1} and \eqref{weil2}, this implies that
\[
\mathrm{Ind}^G_{H^+}(\kappa_+)=\beta_n+\zeta_n^1,
\qquad
\mathrm{Ind}^G_{H^-}(\kappa_-)=\alpha_n+\zeta_n^1.
\]
Certainly, $\mathrm{Ind}^G_{H^+}(\kappa_+)$ is zero on every element not conjugate to an element of $H^+$ (and hence on $\mathcal{D}^+$), and similarly 
that $\mathrm{Ind}^G_{H^-}(\kappa_-)$ is zero on every element not conjugate to an element of $H^-$ (and hence on $\mathcal{D}^-$). It follows from Eq. \eqref{value} that 
$$(\alpha_n+\zeta_n^1)(x^-)=(\beta_n+\zeta_n^1)(x^+)=0$$ and hence, from Eq.~\eqref{eq:zetazeta}, we obtain
\begin{equation}\label{eq:alphaalpha}
\alpha_n(x^-)=0, \qquad \beta_n(x^+)=0.
\end{equation}
Therefore,  $\lambda(\alpha_n,-)=\lambda(\beta_n,+)=0>\lambda(\rho^\varepsilon,\varepsilon)$ and hence $\chi^*=\alpha_n$ when $\varepsilon=+$ and $\chi^*=\beta_n$ when $\varepsilon=-$.

Now, $\zeta_n(x^\varepsilon)=(-2)^{\dim\Ker(x^\varepsilon-1_ V)}=(-2)^0=1$ and hence $(\alpha_n+\beta_n)(x^\varepsilon)=1$ by Eq.~\eqref{eq:zetazeta}. Thus, Eq.~\eqref{eq:alphaalpha} gives
\[
\alpha_n(x^+)=1, \qquad\beta_n(x^-)=1.
\]
Therefore, $\lambda(\alpha_n,+),\lambda(\beta_n,-)>0>\lambda(\rho^\varepsilon,\varepsilon)$. This finally contradicts the existence of $\chi^*$.

Let $S$ be an independent set of the derangement graph of $G$ in its action on $\Omega^\varepsilon$. Now, using Lemma~\ref{weightedratio} and Eqs.~\eqref{eq:maximumSp} and~\eqref{minimumSp}, we obtain
$$|S|\leq |G|\left(1-\frac{d^\varepsilon}{\tau^\varepsilon}\right)^{-1}=\frac{|G|}{|\Omega^\varepsilon|}$$
and the proposition is proven.
\end{proof}

\thebibliography{10}

\bibitem{AhMeAlt}B.~Ahmadi, K.~Meagher, A new proof for the
  Erd\H{o}s-Ko-Rado theorem for the alternating group, 
\textit{Discrete Math.} \textbf{324} (2014), 28--40.

\bibitem{AhMe}B.~Ahmadi, K.~Meagher, The Erd\H{o}s-Ko-Rado property
  for some $2$-transitive groups, \href{http://arxiv.org/pdf/1308.0621v1.pdf}{arXiv:1308.0621}.

\bibitem{AhMetrans}B.~Ahmadi, K.~Meagher, The Erd\H{o}s-Ko-Rado property
  for some permutation groups, \textit{Austr. J. Comb. }\textbf{61} (2015), 23--41.

\bibitem{Ba}L.~Babai, Spectra of Cayley Graphs, \textit{J. Combin.
  Theory B.}  \textbf{2} (1979), 180--189. 

\bibitem{magma}W.~Bosma, J.~Cannon, C.~Playoust, The Magma algebra system. I. The user language, \textit{J. Symbolic Comput.} \textbf{24} (1997), 235--265. 

\bibitem{B}N.~Boston, W.~Dabrowski, T.~Foguel, et al., The proportion of fixed-point-free elements in a transitive group, \textit{Comm. Algebra} \textbf{21} (1993), 3259--3275.

\bibitem{Ca}P.~J.~Cameron, \textit{Permutation Groups}, London Mathematical
  Society Student Texts 45, 1999.

\bibitem{CaKu}P.~J.~Cameron, C.~Y.~Ku, Intersecting families of
  permutations, \textit{European J. Combin.} \textbf{24} (2003),  881--890.

\bibitem{C1}R.~W.~Carter, \textit{Simple Groups of Lie type}, John Wiley Publications, London, 1972.

\bibitem{C2}R.~W.~Carter, \textit{Finite Groups of Lie type, Conjugacy Classes and Complex Characters}, John Wiley Publications, London, 1993.

\bibitem{ATLAS}J.~H.~Conway, R.~T.~Curtis, S.~P.~Norton, R.~A.~Parker,
  R.~A.~Wilson, \textit{Atlas of finite groups}, Clarendon Press, Oxford, 1985.

\bibitem{DM}J.~D.~Dixon, B.~Mortimer, \textit{Permutation Groups}, Graduate Texts in Mathematics, Springer, New York, 1996.

\bibitem{Ellis}D.~Ellis, Setwise intersecting families of permutations, \textit{J. Combin. Theory Ser. A} \textbf{119} (2012), 825--849.
 
\bibitem{ErKoRa}P.~Erd\H{o}s, C.~Ko, R.~Rado, Intersection theorems for
  systems of finite sets, \textit{Quart. J. Math. Oxford Ser.} \textbf{12} (1961),
   313--320. 

\bibitem{G1}J.~Fulman, R.~Guralnick, Derangements in simple and primitive groups, in: \textit{Groups, Combinatorics, and Geometry (Durham, 2001)}, 99--121, World Sci. Publ., River Edge, NJ, 2003.

\bibitem{G2}J.~Fulman, R.~Guralnick, Derangements in finite classical groups for actions related to extension field and imprimitive subgroups, preprint.

\bibitem{G22}J.~Fulman, R.~Guralnick, Derangements in subspace actions of finite classical groups, preprint.

\bibitem{G3}J.~Fulman, R.~Guralnick, Bounds on the number and sizes of conjugacy classes in finite Chevalley groups with applications to derangements, \textit{Trans. Amer. Math. Soc.} \textbf{364} (2012), 3023--3070.

\bibitem{Gerardin}P.~G\'{e}rardin, Weil representations associated to finite fields, \textit{J. Algebra} \textbf{46} (1977), 54--101.

\bibitem{EKRbook}
C.~Godsil, K.~Meagher
\textit{{E}rd\H{o}s-{K}o-{R}ado Theorems: Algebraic Approaches},
Cambridge University Press, 2015.

\bibitem{GoMe}C.~Godsil, K.~Meagher, A new proof of the
  Erd\H{o}s-Ko-Rado theorem for intersecting families of permutations,
  \textit{European J. of Combin.} \textbf{30} (2009), 404--414.

\bibitem{mePa}S.~Guest, A.~Previtali, P.~Spiga, A remark on the permutation representations afforded by the embeddings of $\mathrm{O}_{2m}^\pm(2^f)$ in $\Sp_{2m}(2^f)$, \textit{Bull. Austr. Math. Soc. }\textbf{89} (2014), 331--336.

\bibitem{GT1}R.~Guralnick, P.~H.~Tiep, Low-dimensionalrepresentations of special linear groups in cross characteristic, \textit{Proc. London Math. Soc. }\textbf{78} (1999), 116--138.


\bibitem{GT}R.~Guralnick, P.~H.~Tiep, Cross characteristic representations of even characteristic symplectic groups, \textit{Trans. Amer. Math. Soc. }\textbf{356} (2004), 4969--5023.

\bibitem{GIS}R.~Guralnick, I.~M.~Isaacs, P.~Spiga, On a relation between the rank and the proportion of derangements in finite transitive groups, \textit{J. Comb. Theory Series A}, to appear, doi:10.1016/j.jcta.2015.07.003.

\bibitem{MR2302532}
C.~Y. Ku, T.~W.~H. Wong, Intersecting families in the alternating group and direct product of
  symmetric groups, \textit{Electron. J. Combin.} \textbf{14}: Research Paper 25, 15 pp.
  (electronic), 2007.

\bibitem{LaMa}B. Larose, C. Malvenuto, Stable sets of maximal size
  in Kneser-type graphs, \textit{European J. Combin.} \textbf{25} (2004),
   657--673. 


 \bibitem{KaPa}K.~Meagher, P.~Spiga, An Erd\H{o}s-Ko-Rado theorem for
   the derangement graph of $\mathrm{PGL}(2,q)$ acting on the
   projective line, \textit{J. Comb. Theory Series A} \textbf{118}
   (2011), 532--544.

 \bibitem{KaPa2}K.~Meagher, P.~Spiga, An Erd\H{o}s-Ko-Rado theorem for
   the derangement graph of $\mathrm{PGL}_3(q)$ acting on the
   projective plane, \textit{SIAM J. Discrete Math. } \textbf{28}
   (2011), 918--941.

\bibitem{MR0335618}
W.~A.~Simpson, S.~J.~Frame,
The character tables for {${\rm SL}(3,\,q)$}, 
{${\rm SU}(3,\,q^{2})$}, {${\rm PSL}(3,\,q)$}, {${\rm PSU}(3,\,q^{2})$},
\textit{Canad. J. Math.} \textbf{25} (1973), 486--494.

\bibitem{suzuki}M.~Suzuki, On a class of doubly transitive groups, \textit{Annals of Math. }\textbf{75} (1962), 105--145.


 \bibitem{TZ}P.~H.~Tiep, A.~E.~Zalesskii, Minimal characters of the finite classical groups, \textit{Comm. Algebra} \textbf{24} (1996), 2093--2167.

 \bibitem{MR2419214} J.~Wang, S.~J. Zhang, An {E}rd{\H
     o}s-{K}o-{R}ado-type theorem in {C}oxeter groups,
   \textit{European J. Combin.} \textbf{29} (2008), 1112--1115.

\bibitem{ward}H.~N.~Ward, On Ree's series of simple groups, \textit{Trans. Amer. Math. Soc.} \textbf{121} (1966), 62--89.

\bibitem{Weil}A.~Weil, Sur certains groupes d'op\'{e}rateurs unitaires, \textit{Acta Math. }\textbf{111} (1964), 206--2018.


\bibitem{Zs} 
  K. Zsigmondy, Zur Theorie der Potenzreste, {\it Monatsh. Math. Phys.} {\bf 3} (1892), 
265--284.

\end{document}